\newtheorem{theorem}{Theorem}[section]
\newtheorem{lemma}[theorem]{Lemma}
\newtheorem{problem}{Problem}
\newtheorem{conjecture}{Conjecture}
\begin{document}
\title{Equivalence between Extendibility and Factor-Criticality \thanks{Work supported by the
Scientific Research Foundation of Guangdong Industry Technical
College, granted No. 2005-11.}}
\author{Zan-Bo Zhang$^1$$^2$\thanks{Corresponding Author. Email address: eltonzhang2001@yahoo.com.cn.}, Tao Wang$^3$, Dingjun
Lou$^1$
\\ $^1$Department of Computer Science, \\ Sun Yat-sen University, Guangzhou 510275, China
\\ $^2$Department of Computer Engineering, Guangdong \\ Industry Technical College, Guangzhou 510300, China
\\ $^3$Center for Combinatorics, LPMC, \\ Nankai University, Tianjin 300071, China
}
\date{}
\maketitle
\thispagestyle{empty}
\begin{abstract}
In this paper, we show that if $k\geq (\nu+2)/4$, where $\nu$
denotes the order of a graph, a non-bipartite graph $G$ is
$k$-extendable if and only if it is $2k$-factor-critical. If
$k\geq (\nu-3)/4$, a graph $G$ is $k\frac{1}{2}$-extendable if and
only if it is $(2k+1)$-factor-critical. We also give examples to
show that the two bounds are best possible. Our results are
answers to a problem posted by Favaron \cite{F1} and Yu \cite{Y1}.
$\newline$\noindent\textbf{Key words}:
$n$-factor-critical, $n$-critical, $k$-extendable,
$k\frac{1}{2}$-extendable
\end{abstract}
\section{Introduction, terminologies and preliminary results}
All graphs considered in this paper are finite, connected,
undirected and simple. Let $G$ be a graph, vertex set and edge set
of $G$ are denoted by $V(G)$ and $E(G)$. Let $S\subseteq V(G)$, we
use $G[S]$ to denote the subgraph of $G$ induced by $S$ and $G-S$
to denote the subgraph $G[V(G)\backslash S]$. Let $G_{1}$ and
$G_{2}$ be two disjoint graphs. The \emph{union} $G_1\cup G_2$ is
the graph with vertex set $V(G_{1})\cup V(G_{2})$ and edge set
$E(G_{1})\cup E(G_{2})$. The \emph{join} $G_1\vee G_2$ is the
graph obtained from $G_1 \cup G_2$ by joining each vertex of
$G_{1}$ to each vertex of $G_{2}$. The complete graph on $n$
vertices and its complement are denoted by $K_{n}$ and $I_{n}$.
Let $X$ and $Y$ be two disjoint subsets of $V(G)$, the
number of edges of $G$ from $X$ to $Y$ is denoted by $e(X,Y)$. For
other terminologies and notations not defined in this paper, we
refer the readers to \cite{BM1}.

A \emph{matching} $M$ of $G$ is a subset of $E(G)$ in which no two
edges have a common end-vertex. $M$ is said to be a \emph{perfect
matching} if it covers all vertices of $G$. A graph $G$ is said to
be \emph{$k$-extendable} for $0\leq k\leq (\nu-2)/2$ if it is
connected, contains a matching of size $k$ and any matching in $G$
of size $k$ is contained in a perfect matching of $G$. $G$ is said
to be \emph{minimal $k$-extendable} if $G$ is $k$-extendable and
$G-e$ is not $k$-extendable for each $e\in E(G)$. The concept of
$k$-extendable graphs was introduced by Plummer in \cite{P1}. In
\cite{Y3}, Yu generalized the idea of $k$-extendibility to
$k\frac{1}{2}$-extendibility for graph of odd order. A graph $G$
is said to be \emph{$k\frac{1}{2}$-extendable} if (1) for any
vertex $v$ of $G$ there exists a matching of size $k$ in $G-v$,
and (2) for every vertex $v$ of $G$, every matching of size $k$ in
$G-v$ is contained in a perfect matching of $G-v$.

A graph $G$ is said to be \emph{$n$-factor-critical}, or
\emph{$n$-critical}, for $0\leq n \leq \nu-2$, if $G-S$ has a
perfect matching for any $S\subseteq V(G)$ with $|S|=n$. For
$n=1$, 2, that is \emph{factor-critical} and \emph{bicritical}.
$G$ is called \emph{minimal $n$-factor-critical} if $G$ is
$n$-factor-critical but $G-e$ is not $n$-factor-critical for any
$e\in E(G)$. The concept of $n$-factor-critical graphs was
introduced by Favaron \cite{F} and Yu \cite{Y3}, independently.

It is easy to verify the following theorem.
\begin{theorem}\label{ob1}
For $0\leq k\leq \nu/2-1$, a $2k$-factor-critical graph is
$k$-extendable, and a $(2k+1)$-factor-critical graph is
$k\frac{1}{2}$-extendable.
\end{theorem}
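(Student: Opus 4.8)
The plan is to derive each implication directly from the definitions; the crucial point is that a matching of size $m$ occupies exactly $2m$ vertices, so deleting the vertex set of a matching is precisely the operation governed by factor-criticality.

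First consider the claim that a $2k$-factor-critical graph $G$ is $k$-extendable. Connectivity is part of our standing hypotheses (and a $2k$-factor-critical graph is in any case $2k$-connected). To see that $G$ contains a matching of size $k$, note that $k\le \nu/2-1$ gives $\nu\ge 2k+2$, and that a $2k$-factor-critical graph has minimum degree at least $2k$: if some vertex $v$ had smaller degree, we could extend $N(v)$ to a set $S$ of $2k$ vertices with $v\notin S$ (possible since $|V(G)\setminus\{v\}|=\nu-1>2k$), and then $v$ would be isolated in $G-S$, so $G-S$ would have no perfect matching, a contradiction. A greedy argument now gives a matching of size $k$: repeatedly delete the two ends of an edge; after $k-1$ deletions the minimum degree is still at least $2$, so a further edge is available. (Alternatively, one may use that a $2k$-factor-critical graph has a perfect matching and take $k$ of its edges.) Finally, for the extension property let $M$ be any matching of size $k$; then $|V(M)|=2k$, so by $2k$-factor-criticality $G-V(M)$ has a perfect matching $M'$, and $M\cup M'$ is a perfect matching of $G$ containing $M$.

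Next consider the claim that a $(2k+1)$-factor-critical graph $G$ is $k\frac{1}{2}$-extendable. Fix $v\in V(G)$. Deleting $v$ and then any further $2k$ vertices deletes $2k+1$ vertices in total, so $G-v$ is $2k$-factor-critical; hence, exactly as above, $G-v$ contains a matching of size $k$, which is condition (1) of the definition. For condition (2), let $M$ be a matching of size $k$ in $G-v$. Then $|\{v\}\cup V(M)|=2k+1$, so $G-(\{v\}\cup V(M))$ has a perfect matching $M'$ by $(2k+1)$-factor-criticality, and $M\cup M'$ is a perfect matching of $G-v$ containing $M$.

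There is essentially no obstacle: each extension step is immediate once one reads off the relevant vertex set, and the only mildly technical point — that the required initial matchings of size $k$ exist at all — is handled by the minimum-degree/greedy argument above (or by the standard fact that an $n$-factor-critical graph with $\nu>n$ is $(n-2)$-factor-critical, and hence has a perfect matching). Thus the theorem is indeed routine to verify.
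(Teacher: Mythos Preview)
Your proof is correct and is precisely the routine verification the paper has in mind: the paper gives no proof at all, merely stating that the theorem ``is easy to verify.'' Your argument---deleting $V(M)$ (respectively $\{v\}\cup V(M)$) and invoking factor-criticality, with the existence of an initial $k$-matching supplied by the minimum-degree bound---is exactly the direct check intended.
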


The reverse of Theorem \ref{ob1} does not hold in general. For
example, a $k$-extendable bipartite graphs can not be
$n$-factor-critical for any $n>0$. However, there has been lots of
research on the relationship between $k$-extendable non-bipartite
graphs and $n$-factor-critical graphs. Most of the results can be
viewed as answers to the following problem, which has been posted
by Favaron \cite{F1} and Yu \cite{Y1}, in slightly different
forms.

\begin{problem}\label{pl1}
Does there exist a non-null function $f(k)$ such that every
$k$-extendable non-bipartite graph of even order $\nu\geq 2k + 2$
is $f(k)$-factor-critical?
\end{problem}

The following two results of Plummer \cite{P1} are answers to
$k=2$, 3.

\begin{theorem}\label{lmp3}
Let $G$ be 2-extendable and non-bipartite with $\nu\geq 6$, then
$G$ is bicritical.
\end{theorem}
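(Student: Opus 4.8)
The plan is to argue by contradiction, using Tutte's theorem to extract a barrier that contains the two deleted vertices, and then to play this barrier off against $2$-extendability by exhibiting a matching of size~$2$ that fails to extend. First I would record the two classical consequences of the definition that I need: a $2$-extendable graph is $1$-extendable (so it has a perfect matching and every edge lies in one) and it is $3$-connected. Suppose $G$ is not bicritical, so there are vertices $u,v$ with $G-u-v$ having no perfect matching. Since $G-u-v$ has even order, Tutte's theorem together with a parity count yields a set $S\subseteq V(G)\setminus\{u,v\}$ with $o(G-u-v-S)\ge |S|+2$. Put $B=S\cup\{u,v\}$, so $o(G-B)\ge |B|$; as $G$ has a perfect matching the reverse inequality also holds, whence $o(G-B)=|B|=:m$. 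Write $C_1,\dots,C_m$ for the odd components of $G-B$, and observe that each $C_i$ is joined to the rest of $G$ only through $B$.

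The key step is to show that every $C_i$ is a single vertex. Fix $C_i$. If the bipartite graph of $C_i$--$B$ edges contains two independent edges $b_1c_1,\,b_2c_2$, extend this pair to a perfect matching $M$; then $c_1,c_2$ are matched into $B$, and since $C_i\setminus\{c_1,c_2\}$ has odd cardinality it sends a further $M$-edge to $B$, so $C_i$ uses at least three $M$-edges into $B$ while every other $C_j$ uses at least one. As $M$ is a matching these edges cover $\ge m+2$ distinct vertices of the $m$-set $B$, a contradiction. Otherwise the $C_i$--$B$ bipartite graph has a vertex cover of size $\le 1$: a covering vertex in $B$ would be a cut vertex of $G$, and a covering vertex $c^*\in C_i$ would be a cut vertex whenever $|C_i|\ge 2$; since $G$ is $3$-connected (in particular $2$-connected) the only possibility left is $|C_i|=1$. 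So $G-B$ has $m$ singleton odd components $c_1,\dots,c_m$, they form an independent set with all neighbours in $B$, and $3$-connectivity forces $m=|B|=\deg(c_i)\ge 3$ for each $i$.

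It then remains to eliminate the two possible shapes of $G-B$. If $G-B$ has an even component $D$, choose $b_1\in B$ adjacent to a vertex $d\in D$ and $b_2\in N(c_1)\cap B$ with $b_2\ne b_1$ (possible since $\deg c_1\ge 3$); extending the matching $\{db_1,\,c_1b_2\}$ to a perfect matching forces the $m-1$ vertices $c_2,\dots,c_m$ into the $(m-2)$-set $B\setminus\{b_1,b_2\}$, which is impossible. If instead $G-B$ has no even component, then $V(G)=B\cup\{c_1,\dots,c_m\}$; an edge inside $B$ would, on completing it to a perfect matching, push all $m$ vertices $c_i$ into $B$ minus its two endpoints, again impossible, so $G[B]$ is edgeless and $G$ is bipartite with parts $B$ and $\{c_i\}$ --- contradicting that $G$ is non-bipartite. (The hypothesis $\nu\ge 6$ is what makes $2$-extendability meaningful in the first place and is automatically in force once $m\ge 3$.)

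The main obstacle, and the part deserving the most care, is the reduction in the second paragraph: passing from an arbitrary barrier $B$ to the rigid configuration in which every odd component is a single vertex. One has to be scrupulous about which edges of the extended perfect matching are being counted (using that distinct components cannot share a $B$-vertex in a matching) and to invoke $2$-connectivity precisely to discard the degenerate vertex-cover cases; after that, the two final contradictions are short. A secondary point to watch throughout is that each pair of edges fed to $2$-extendability is genuinely independent, which is exactly why one insists on $b_2\ne b_1$ in the even-component case.
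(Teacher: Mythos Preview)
The paper does not supply its own proof of this statement: Theorem~\ref{lmp3} is quoted as a known result of Plummer and simply cited from~\cite{P1} as background for Problem~\ref{pl1}. There is therefore no argument in the paper to compare yours against.

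Assessed on its own, your proof is correct. The Tutte barrier $B=S\cup\{u,v\}$ with $o(G-B)=|B|=m$ is set up cleanly, and the reduction of every odd component to a singleton is sound: in the case of two independent $C_i$--$B$ edges, the matching count against $|B|=m$ works because each other odd component must also send an $M$-edge into $B$; in the K\"onig case, the cover vertex (whether in $B$ or in $C_i$ with $|C_i|\ge 2$) is indeed a cut vertex, ruled out by $3$-connectivity. The two closing cases are also fine; note that in the ``no even component'' branch you are really only invoking $1$-extendability to extend the single edge $b_1b_2$, which you have since $2$-extendable implies $1$-extendable. One minor wording slip: you write ``$m=|B|=\deg(c_i)\ge 3$'', but what the argument actually needs (and what follows from $N(c_i)\subseteq B$ and $\delta(G)\ge 3$) is $m=|B|\ge\deg(c_i)\ge 3$; the equality $\deg(c_i)=m$ is neither claimed nor required.
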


\begin{theorem}\label{lmp3}
Let $G$ be 3-extendable and bicritical with $\nu\geq 8$, then
$G-e$ is again bicritical for any $e\in E(G)$.
\end{theorem}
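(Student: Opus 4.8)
The plan is to verify the definition of bicriticality for $G-e$ directly. Write $e=xy$; we must show that for any two distinct vertices $a,b$ the graph $(G-e)-\{a,b\}$ has a perfect matching. If $\{a,b\}\cap\{x,y\}\ne\emptyset$ then $(G-e)-\{a,b\}=G-\{a,b\}$, which has a perfect matching since $G$ is bicritical; so we may assume $a,b,x,y$ are distinct, and the task becomes: produce a perfect matching of $G-\{a,b\}$ in which $x$ is not matched to $y$. Throughout I would use that a $3$-extendable graph is $2$-extendable and, by Plummer's connectivity theorem, $4$-connected, so $\delta(G)\ge4$ and $G$ has a perfect matching. (Write $o(H)$ for the number of odd components of a graph $H$.)

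\emph{Case $ab\in E(G)$.} Since $\delta(G)\ge4$ and $G$ is $4$-connected with $\nu\ge8$, one may pick $x'\in N(x)\setminus\{y,a,b\}$ and $y'\in N(y)\setminus\{x,a,b\}$ with $x'\ne y'$ (if this failed, a short case check produces a $3$-cut $\{a,b,w\}$ of $G$). Then $\{ab,xx',yy'\}$ is a matching of size $3$, so by $3$-extendability it lies in a perfect matching $M$ of $G$; now $M\setminus\{ab\}$ is a perfect matching of $G-\{a,b\}$ with $x$ matched to $x'\ne y$, as required.

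\emph{Case $ab\notin E(G)$.} Suppose for contradiction that every perfect matching of $G-\{a,b\}$ contains $xy$, i.e.\ $G-\{a,b\}-xy$ has no perfect matching. Applying Tutte's theorem to $G-\{a,b\}-xy$ and using that $G-\{a,b\}$ \emph{has} a perfect matching (so deleting the single edge $xy$ raises the number of odd components by at most $2$), one obtains a set $S\supseteq\{a,b\}$, with $x,y\notin S$, such that $o(G-S)=|S|-2$ and $xy$ is a bridge of its component $C$ of $G-S$, with $C-xy=C_x\cup C_y$, $x\in C_x$, $y\in C_y$, both odd. Then $4$-connectivity, together with $o(G-S)=|S|-2$ and $C$ being even, forces $|S|\ge4$; applying the previous case with any edge of $G[S]$ in the role of $ab$ shows $S$ is independent; and the characterization ``$G$ is bicritical if and only if $o(G-T)\le|T|-2$ for every $T$ with $|T|\ge2$'', applied to the sets $T=S\cup\{x\}\cup Z$ (for $Z\subseteq V(C_x)\setminus\{x\}$), $T=S\cup\{y\}\cup Z$, and $T=S\cup Z$ with $Z$ inside an even component of $G-S$, shows that $C_x-x$ and $C_y-y$ have perfect matchings and that every even component of $G-S$ has one (in particular $C$ does, necessarily using $xy$, since $C_x,C_y$ are odd).

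The contradiction should then come from the inequality that $k$-extendability supplies: if $G$ is $k$-extendable and $G[U]$ contains a matching of size $k$, then $o(G-U)\le|U|-2k$ (extend the matching to a perfect matching of $G$ and count the vertices of $U$ it matches inside $U$). Taking $k=3$ and playing this against a ``bad'' set $S$ --- whose deletion already leaves $|S|-2$ odd components, while $C_x-x,C_y-y$ have perfect matchings, $C$ has at least four neighbours in $S$, and the other components are tightly attached to $S$ --- one should be able to adjoin to $S$ a bounded number of well-chosen vertices (from $C$, or from the odd components) to form a set $U$ for which $G[U]$ contains a $3$-matching yet $o(G-U)>|U|-6$; in the most degenerate configurations (e.g.\ $G-S$ a union of singletons together with the single edge $xy$) one instead directly exhibits a set $S'$ with $o(G-S')\ge|S'|$, contradicting bicriticality. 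I expect this final step --- choosing the right enlargement $U$ of $S$ and organizing the degenerate subcases ($|C_x|=1$, or $|S|=4$) --- to be the main obstacle; everything preceding it is routine bookkeeping.
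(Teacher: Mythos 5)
This theorem is quoted in the paper from Plummer's article \cite{P1}; the paper itself contains no proof of it, so there is nothing internal to compare your argument against, and it has to stand on its own. It does not yet do so.

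What you have is correct as far as it goes. The reduction to ``find a perfect matching of $G-\{a,b\}$ that avoids $xy$'' for $a,b,x,y$ distinct is right, and the case $ab\in E(G)$ is genuinely complete: $\kappa(G)\ge 4$ gives $\delta(G)\ge 4$, the choice of $x',y'$ can only fail if $N(x)=\{y,a,b,w\}$ and $N(y)=\{x,a,b,w\}$ for a common $w$, which makes $\{a,b,w\}$ a cutset since $\nu\ge 8$, and then $\{ab,xx',yy'\}$ extends by $3$-extendability to a perfect matching of $G$ whose restriction to $G-\{a,b\}$ avoids $xy$. That part I would accept.

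The genuine gap is the case $ab\notin E(G)$, which is the substance of the theorem, and you concede as much in your last sentence. The Tutte-theorem setup (a set $S\supseteq\{a,b\}$ with $o(G-S)=|S|-2$ and $xy$ a bridge of an even component splitting into two odd halves) is the right frame, and the counting principle you cite --- if $G[U]$ contains a $k$-matching then $o(G-U)\le|U|-2k$ --- is valid and is the right tool. But the contradiction is never actually derived: ``one should be able to adjoin to $S$ a bounded number of well-chosen vertices'' is a statement of intent, not an argument, and the degenerate configurations you flag ($|C_x|=1$, $|S|=4$, $G-S$ nearly discrete) are exactly where such arguments usually break. Two smaller points in the same case also need repair: (i) your claim that $4$-connectivity forces $|S|\ge 4$ fails for $|S|=2$ (then $o(G-S)=0$ and $G-S$ may be connected; this subcase must be killed separately, e.g.\ by noting it forces $C_x=\{x\}$ and hence $d(x)\le 3<\delta(G)$); (ii) ``applying the previous case shows $S$ is independent'' does not follow --- the previous case concerns deleting the two ends of an edge, not the set $S$; what the counting principle actually yields is that $G[S\cup\{x,y\}]$ has matching number at most $2$, hence $G[S]$ has matching number at most $1$, which is weaker than independence. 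Until the final contradiction is written out in full, the proof is incomplete.
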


And they have been generalized for all $k\geq 0$, as below.

\begin{theorem}\label{lmf1} $($Favaron \cite{F1}, Liu and Yu \cite{LiuY}$)$
For even integer $k\geq 0$, every connected, non-bipartite,
$k$-extendable graph of even order $\nu>2k$ is
$k$-factor-critical.
\end{theorem}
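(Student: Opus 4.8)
The plan is to translate $k$-factor-criticality into a Tutte-type condition and then argue by contradiction. Applying Tutte's $1$-factor theorem to $G-S$, one sees that for a graph with $\nu\equiv k\pmod 2$, $G$ is $k$-factor-critical if and only if $o(G-X)\le|X|-k$ for every $X\subseteq V(G)$ with $|X|\ge k$, where $o$ counts odd components. Since a $k$-extendable graph has a perfect matching, $\nu$ is even, and since $k$ is even we get $o(G-X)\equiv|X|-k\pmod 2$ for all $X$, so the condition can only fail with a gap of at least $2$. The case $k=0$ is just the existence of a perfect matching, so assume $k\ge 2$; then $G$ is $(k+1)$-connected and, being $2$-extendable and non-bipartite with $\nu\ge 6$, is bicritical \cite{P1}. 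Suppose for contradiction that $G$ is not $k$-factor-critical and choose, of smallest cardinality, a set $X$ with $|X|\ge k$ and $o(G-X)\ge|X|-k+2$. Then $G-X$ has at least two components, so by $(k+1)$-connectivity $|X|\ge k+1$ and hence $o(G-X)\ge 3$; write $Q_1,\dots,Q_r$ for the odd components of $G-X$, so $r\ge|X|-k+2$ and $|N(Q_j)|\ge k+1$ for each $j$. Minimality of $X$ forces each vertex $x\in X$ to have neighbours in at least three of the $Q_j$: otherwise returning $x$ merges the at-most-two odd components it meets into one, and a parity count shows $o\bigl(G-(X\setminus\{x\})\bigr)\ge o(G-X)-1\ge|X\setminus\{x\}|-k+2$, so $X\setminus\{x\}$ would again be bad.

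The main case is $\nu(G[X])\ge k/2$, where $\nu(\cdot)$ denotes matching number. Fix a matching $M_1$ of size $k/2$ inside $G[X]$. As $G$ is $k$-extendable it is $\tfrac k2$-extendable \cite{P1}, so $M_1$ lies in a perfect matching $M^{*}$ of $G$, and we may pick a matching $M'$ with $M_1\subseteq M'\subseteq M^{*}$ and $|M'|=k$. Let $m_1\ (\ge k/2)$ be the number of edges of $M'$ with both ends in $X$ and $m_2$ the number with exactly one end in $X$. For any $Q_j$ not met by a one-endpoint-in-$X$ edge of $M'$, an odd, hence positive, number of its vertices are matched by $M^{*}$ into $X$, necessarily into $X\setminus V(M')$, and distinct such components use distinct vertices; since at most $m_2$ of the $Q_j$ are met by a crossing edge of $M'$,
\[
 r\ \le\ m_2+\bigl(|X|-2m_1-m_2\bigr)\ =\ |X|-2m_1\ \le\ |X|-k,
\]
contradicting $r\ge|X|-k+2$. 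Hence from now on $\nu(G[X])\le k/2-1$.

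In this remaining case $G[X]$ is sparse, and it is precisely here that non-bipartiteness must enter: a bipartite $k$-extendable graph such as $K_{k+1,k+1}$ meets every other hypothesis yet is not even factor-critical. The Berge--Tutte formula applied inside $G[X]$ yields $Y\subseteq X$ with
\[
 o(G[X]-Y)-|Y|\ =\ |X|-2\nu(G[X])\ \ge\ |X|-k+2 ,
\]
so $G[X]-Y$ has odd components $P_1,\dots,P_s$ with $s\ge|Y|+|X|-k+2$. No two of the $P_i$ are adjacent in $G$ and no two of the $Q_j$ are adjacent in $G$, so deleting $Y$ leaves a graph whose component structure is determined by the bipartite incidence pattern between the blocks $\{P_i\}$, together with the even components of $G[X]-Y$, and the blocks $\{Q_j\}$, together with the even components of $G-X$. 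The plan is then to play the two barriers $X$ and $Y$ against one another: the inequality $o(G-Y)\le|Y|$, valid because $G$ has a perfect matching, forces all but at most $|Y|$ of the $P_i$ to reach across the $P$--$Q$ cut, and similarly on the $Q$ side; tracing the perfect matchings — and the near-perfect matchings supplied by bicriticality — through this almost-bipartite skeleton, together with the bounds $|N(P_i)|,|N(Q_j)|\ge k+1$ and the minimal choice of $X$, one should constrain the skeleton tightly enough to conclude it contains no odd cycle. Then a large induced subgraph of $G$, which $k$-extendability forces to be all of $G$, is bipartite, a contradiction.

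I expect this last step — extracting a genuine contradiction when $G[X]$ is sparse — to be the real obstacle. The main case is just bookkeeping around one perfect matching, and the evenness of $k$ is used only there, to sharpen ``$o(G-X)>|X|-k$'' to ``$o(G-X)\ge|X|-k+2$'', which is exactly what makes the threshold $\nu(G[X])\ge k/2$ work. In the sparse case two unrelated deficiency phenomena — that of $G-S$ recorded by $X$, and that of $G[X]$ recorded by $Y$ — must be shown jointly incompatible with a connected, non-bipartite, $k$-extendable host of order $\nu>2k$, and all of those hypotheses seem to be needed simultaneously.
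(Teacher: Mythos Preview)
The paper does not prove this statement at all: Theorem~\ref{lmf1} is quoted from Favaron~\cite{F1} and Liu--Yu~\cite{LiuY} as background in the introduction, with no argument supplied. So there is no ``paper's own proof'' to compare against, only the original references.

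That said, your proposal is not a complete proof, as you yourself acknowledge. The ``main case'' $\nu(G[X])\ge k/2$ is handled correctly: the matching $M_1\subseteq M'\subseteq M^{*}$ and the count
\[
 r\ \le\ m_2+\bigl(|X|-2m_1-m_2\bigr)\ =\ |X|-2m_1\ \le\ |X|-k
\]
are sound, and this is exactly where the parity hypothesis on $k$ is doing its work. But the ``sparse case'' $\nu(G[X])\le k/2-1$ is only a plan, not an argument. You introduce a second barrier $Y\subseteq X$ via Berge--Tutte, observe that the incidence between the $P_i$ and the $Q_j$ is bipartite, and then say one ``should constrain the skeleton tightly enough to conclude it contains no odd cycle''. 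Nothing in what you have written actually does this: the hypotheses $|N(P_i)|,|N(Q_j)|\ge k+1$ and bicriticality give lower bounds on the number of incidences, whereas to force bipartiteness you need to \emph{rule out} incidences (or odd circuits among them), and no mechanism for that is supplied. The fact that every $x\in X$ meets at least three $Q_j$, which you prove carefully, is never used. As it stands the sparse case is a genuine gap: you have not shown that the two deficiency inequalities for $X$ and $Y$ are jointly incompatible with non-bipartiteness, and that is the entire content of the theorem in this case. If you want to complete the argument along these lines you would need to consult the original proofs in~\cite{F1} or~\cite{LiuY}, which the present paper does not reproduce.
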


\begin{theorem}\label{lmf2} $($Favaron \cite{F1}$)$
For even integer $k\geq 0$, every connected non-bipartite,
$(k+1)$-extendable graph $G$ of even order $\nu\geq 2k+4$ is
$k$-factor-critical. Moreover, $G-e$ is $k$-factor-critical for
every edge $e$ of $G$.
\end{theorem}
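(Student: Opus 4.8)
The plan is to treat the two assertions separately; the first is immediate and the second carries all the weight. For the first assertion, recall the standard facts (Plummer \cite{P1}) that a $(k+1)$-extendable graph is $k$-extendable and $(k+2)$-connected; then $G$ is a connected, non-bipartite, $k$-extendable graph of even order $\nu\ge 2k+4>2k$, and since $k$ is even, Theorem~\ref{lmf1} applies and gives that $G$ is $k$-factor-critical. For the ``moreover'' part, fix an edge $e=xy$ of $G$ and a set $S\subseteq V(G)$ with $|S|=k$; I must show that $(G-e)-S$ has a perfect matching. If $x\in S$ or $y\in S$ then $(G-e)-S=G-S$, which has a perfect matching by the first part, so I may assume $x,y\notin S$, and then it suffices to produce a perfect matching of $G-S$ avoiding $e$.

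Suppose no such matching exists, i.e.\ every perfect matching of $G-S$ contains $e$. Then $G-S-x-y$ has a perfect matching while $G-S-e$ (with the edge deleted) has none, so by the Tutte--Berge formula there is $T\subseteq V(G)\setminus S$ with $o((G-S-e)-T)\ge |T|+2$, where $o(\cdot)$ denotes the number of odd components. Since $G-S$ has a perfect matching, $o(G-S-T)\le |T|$, and deleting a single edge raises the number of odd components by at most $2$; forcing all of these inequalities to be equalities yields $o(G-S-T)=|T|$ together with the fact that $e$ lies in an even component $C$ of $G-S-T$ and is a bridge of $C$ whose removal splits $C$ into two odd pieces $C_x\ni x$ and $C_y\ni y$. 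Write $D_1,\dots,D_{|T|}$ for the odd components of $G-S-T$. A short observation shows $|T|\ge 2$: $G-S$ is $2$-connected, hence stays connected after deleting at most one vertex, whereas $G-S-T$ has the components $C,D_1,\dots$ and so is disconnected.

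It then remains to contradict the $(k+1)$-extendability of $G$, which I would do by exhibiting a matching $M$ of size $k+1$ such that $G-V(M)$ has no perfect matching. The mechanism is that each $D_i$ meets the rest of $G$ only through $S\cup T$, and, $e$ being a bridge of $C$, deleting $y$ detaches $C_x$ from $C_y$, so $C_x$ too becomes an odd block attached only through $S\cup T$. I would build $M$ so as to saturate $S\cup T$ (and $y$, in the cases where that is needed), steering the vertices of $S\cup T$ that are forced into odd components into distinct $D_i$'s, routing the remaining vertices of $S\cup T$ into $C$ and into the other even components, and then padding $M$ up to size exactly $k+1$ — there is room because $\nu\ge 2k+4$. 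For such an $M$ the graph $G-V(M)$ still has at least two isolated odd components, and since $\nu-2(k+1)$ is even it has no perfect matching, the desired contradiction. The hard part is verifying that an $M$ with these properties genuinely exists: this requires care with the vertices of $T$ whose neighbours all lie in the odd components, and it is cleanest to organize the count through a case split on the size of $|T|$ relative to $k$, using the $(k+2)$-connectivity to guarantee that $S\cup T$ is adequately joined to the even part. This matching construction is exactly where both hypotheses are consumed (the extra $+1$ over $k$-extendability absorbs the bridge; $\nu\ge 2k+4$ supplies the room to complete $M$), and I expect it to be the main obstacle; the bound $\nu\ge 2k+4$ is sharp, matching the examples in the paper.
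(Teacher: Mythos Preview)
The paper does not prove this theorem at all: it is quoted as a preliminary result due to Favaron~\cite{F1} and no argument is given in the present paper, so there is nothing here against which to compare your proposal.

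On its own merits, your treatment of the first assertion is correct and is indeed the intended one-line reduction: $(k+1)$-extendable implies $k$-extendable (Lemma~\ref{lmp1}), and then Theorem~\ref{lmf1} applies. For the ``moreover'' part, your Tutte--Berge setup is the standard opening, and the deduction that $e$ is a bridge of an even component $C$ of $G-S-T$ with $o(G-S-T)=|T|$ is clean; your argument that $|T|\ge 2$ via the $(k+2)$-connectivity of $G$ is also fine. The genuine gap is the one you flag yourself: the construction of the matching $M$ of size $k+1$ saturating $S\cup T$ (and possibly $y$) while leaving at least two odd blocks stranded is only described in outline. In particular, the assertion that one can route the vertices of $T$ whose neighbours lie only in odd components into \emph{distinct} $D_i$'s, while simultaneously absorbing the rest of $S\cup T$ into even parts and padding up to exactly $k+1$, needs a concrete Hall-type or counting argument; the $(k+2)$-connectivity bounds $e(D_i,S\cup T)$ from below but does not by itself give the required system of distinct representatives. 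Until that step is written out, the proposal is a plausible plan rather than a proof.
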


In light of Theorem \ref{ob1}, if under some conditions
$k$-extendable graphs are $2k$-factor-critical, then the two
classes of graphs are equal. The following results show that this
happens when $k$ is large relative to $\nu$.

\begin{theorem}\label{lmfs1}
$($Favaron and Shi \cite{FS}$)$ Every $((\nu/2)-2)$-extendable
non-bipartite graph with $\nu\geq 14$ is
$(\nu-4)$-factor-critical.
\end{theorem}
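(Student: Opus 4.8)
The plan is to rewrite the conclusion so that the extendibility hypothesis can be applied directly, and then import the factor-criticality results already recorded for small fixed parameters. Put $k=\nu/2-2$, so that the hypothesis is ``$G$ is $k$-extendable'' and the conclusion is ``$G$ is $2k$-factor-critical''. For $S\subseteq V(G)$ with $|S|=\nu-4$, writing $T:=V(G)\setminus S$ (so $|T|=4$) we have $G-S=G[T]$, so the theorem is equivalent to the assertion that \emph{every $4$-element subset $T$ of $V(G)$ induces a subgraph containing two disjoint edges}. The link between the two formulations is the following extendibility argument: if $G-T$ has a perfect matching $M$, then $|M|=(\nu-4)/2=k$, so $M$ is a $k$-matching of $G$; by $k$-extendibility $M$ is contained in a perfect matching $M^{*}$ of $G$, and since $M\subseteq M^{*}$ every edge of $M^{*}\setminus M$ has both ends among the vertices left uncovered by $M$, i.e.\ inside $T$. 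Hence the two edges of $M^{*}\setminus M$ form a perfect matching of $G[T]$, and it suffices to prove that $G$ is $4$-factor-critical.

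For that step I would simply quote the results in the excerpt. Because $\nu\geq 14$ we have $k=\nu/2-2\geq 5$, and a $k$-extendable graph is $j$-extendable for all $0\leq j\leq k$ (Plummer's descent property), so $G$ is in particular $4$-extendable. As $G$ is also connected, non-bipartite, of even order, and $\nu>8=2\cdot 4$, Theorem~\ref{lmf1} (or Theorem~\ref{lmf2} taken with parameter $4$) shows that $G$ is $4$-factor-critical. Combined with the link above this gives: for every $4$-set $T$, $G-T$ has a perfect matching, hence so does $G[T]$, hence $G-S$ has a perfect matching whenever $|S|=\nu-4$; that is, $G$ is $(\nu-4)$-factor-critical.

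The actual weight of the theorem therefore sits in the input ``$G$ is $4$-factor-critical'', and this is exactly where non-bipartiteness is indispensable: a bipartite graph of even order is never even $1$-factor-critical, yet $K_{n,n}$ is $(\nu/2-1)$-extendable, so the statement is genuinely false without that hypothesis. If instead one wanted to prove the $4$-factor-criticality step directly rather than quoting Theorems~\ref{lmf1}--\ref{lmf2}, the natural route is a Tutte--Berge / Gallai--Edmonds analysis: a $k$-extendable graph is $(k+1)$-connected, so here $\delta(G)\geq\nu/2-1$ and $G$ is $(\nu/2-1)$-connected; if deleting at most four vertices destroyed every near-perfect matching there would be a barrier $W$ with $o(G-W)\geq|W|-2$, and comparing the number of components of $G-W$ (at least roughly $\nu/2$) with the number of remaining vertices (at most roughly $\nu/2$) forces almost all of the latter to be isolated vertices whose neighbourhoods lie in $W$; from such a configuration one then builds a $k$-matching of $G$ whose four uncovered vertices contain an independent triple, or form a triangle plus a non-neighbour, so the matching cannot extend to a perfect matching, contradicting $k$-extendibility. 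In that self-contained version I expect the main obstacle to be the precise counting that pins down the threshold $\nu\geq 14$, together with the point at which non-bipartiteness is used to exclude the ``bipartite-type'' barrier; but given what is already available in the excerpt, the short argument above is the one I would write.
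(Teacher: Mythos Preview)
Your argument is correct, but there is nothing in the paper to compare it against: Theorem~\ref{lmfs1} is quoted from Favaron and Shi~\cite{FS} as a preliminary result and is not proved in this paper. What the paper does prove is the stronger Theorem~\ref{th21}, which subsumes Theorem~\ref{lmfs1}: setting $k=\nu/2-2$, the hypothesis $k\geq(\nu+2)/4$ of Theorem~\ref{th21} reduces to $\nu\geq 10$, so the paper in fact recovers the conclusion under a weaker assumption than $\nu\geq 14$.

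Your route is genuinely different from the paper's proof of that generalisation. You observe that for $|T|=4$ any perfect matching of $G-T$ is a $k$-matching, extend it by $k$-extendibility, and read off a perfect matching of $G[T]$; this reduces everything to ``$G$ is $4$-factor-critical'', which you obtain from Theorem~\ref{lmf1} via the descent Lemma~\ref{lmz1}. The paper's proof of Theorem~\ref{th21} instead argues directly with the bad set $S$ of size $2k$: it chooses $S$ to maximise the matching number of $G[S]$, shows that maximum must be $k-1$, and then runs a degree count on the two unmatched vertices of $G-S$ against the connectivity bound $\kappa(G)\geq 2k$ from Lemma~\ref{lmly1}. Your approach is shorter and modular but relies on the black box Theorem~\ref{lmf1}; the paper's direct method is self-contained from Lemma~\ref{lmly1} and is what produces the sharp threshold $k\geq(\nu+2)/4$.
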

\begin{theorem}\label{lmy1}
$($Yu \cite{Y1}$)$ Let $G$ be a non-bipartite graph of even order
and $k$ an integer. If $G$ is $k$-extendable and $k\geq 2(\nu+1)/3$,
then $G$ is $2k$-factor-critical.
\end{theorem}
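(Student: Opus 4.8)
The plan is to argue by contradiction. Suppose $G$ is non-bipartite of even order $\nu$ and $k$-extendable with $k\ge 2(\nu+1)/3$, but $G$ is not $2k$-factor-critical. Since a $k$-extendable graph is $0$-extendable, $G$ has a perfect matching, so by Tutte's theorem the failure of $2k$-factor-criticality yields a set $S_0$ with $|S_0|=2k$ for which $G-S_0$ has no perfect matching; applying Tutte once more inside $G-S_0$, absorbing the obstructing set, and passing to the Gallai--Edmonds decomposition, I obtain $S\subseteq V(G)$ with $m:=|S|\ge 2k$ such that every component $C_1,\dots,C_c$ of $G-S$ is factor-critical (hence odd) and $c=o(G-S)\ge m-2k+2$. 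Throughout I will use that $k$-extendability makes $G$ $(k+1)$-connected, so $\delta(G)\ge k+1$ and, since $c\ge 2$, $|N_G(C_i)|\ge k+1$ for each $i$.

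The engine of the argument is to manufacture a matching $M$ of $G$ with $|M|=k$ such that $G-V(M)$ has no perfect matching; as $\nu-2k$ is even, this contradicts $k$-extendability. The clean case is when $G[S]$ contains a matching $M$ of size $k$: then $V(M)\subseteq S$, the set $U:=S\setminus V(M)$ has $|U|=m-2k$, and $(G-V(M))-U=G-S$ still has $c\ge m-2k+2>|U|$ odd components, so $G-V(M)$ has no perfect matching. If $p:=\nu(G[S])<k$ but some component $C_i$ is large, one instead builds $M$ from a maximum matching of $G[S]$ together with $k-p$ edges inside $C_i$, chosen so that deleting their endpoints shatters $C_i$ into many odd pieces; this again leaves a barrier $U\subseteq S$ with $o((G-V(M))-U)>|U|$, and is the mechanism behind Plummer's proof for $k=2$ (where $c=m$). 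So the whole problem is reduced to forcing one of these two situations to occur.

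This reduction is where the hypothesis $k\ge 2(\nu+1)/3$ and the non-bipartiteness of $G$ must be spent, and it is the step I expect to be the main obstacle. If $p=\nu(G[S])\le k-1$, a maximum matching of $G[S]$ exhibits an independent set $I\subseteq S$ with $|I|\ge m-2p\ge m-2k+2\ge 2$, every vertex of which sends its $\ge k+1$ edges into $(S\setminus I)\cup\bigcup_i C_i$ with $|S\setminus I|\le 2k-2$; meanwhile $\nu=m+\sum_i|C_i|\ge m+c\ge 2m-2k+2$, so $m$ is bounded above in terms of $\nu$ and $k$, and the lower bound on $k$ confines $m$, $c$ and the component sizes to a narrow range. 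I would drive this bookkeeping to the conclusion that the components must be (essentially) single vertices and $G[S]$ (essentially) edgeless --- but then, with $S$ on one side and the singleton components on the other, $G$ would be bipartite, contradicting the hypothesis; and in the remaining part of the range the same counting leaves a component large enough for the shattering construction above. The delicate point is exactly this passage from ``$k$ large relative to $\nu$'' to a rigid structure on $S$ and on the $C_i$, followed by the extraction of an odd cycle --- a genuine use of non-bipartiteness --- that kills the last case; the extremal examples the authors present for sharpness should fix the precise inequalities and the handful of borderline configurations needing a direct check.
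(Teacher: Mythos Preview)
The paper does not prove this theorem itself; it is quoted as a prior result of Yu. What the paper \emph{does} prove is the stronger Theorem~\ref{th21} (hypothesis $k\ge(\nu+2)/4$), and that argument is quite unlike your plan. Rather than passing to a Gallai--Edmonds barrier with $|S|\ge 2k$ and many odd components, the authors keep $|S|=2k$ exactly but choose $S$, among all $2k$-sets whose removal destroys all perfect matchings, so that the matching number of $G[S]$ is \emph{maximal}. By extending matchings back and forth between $G[S]$ and $G-S$ via $k$-extendability they force $G[S]$ to have a matching of size $k-1$ and $G-S$ to have a near-perfect matching missing exactly two vertices $v_1,v_2$; a direct count of $d(v_1)+d(v_2)$ then contradicts the strong connectivity bound $\kappa(G)\ge 2k$ of Lemma~\ref{lmly1} (which is exactly where non-bipartiteness is used). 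There is no Tutte barrier, no component census, no shattering.

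Your outline instead follows the older Plummer/Favaron template of manufacturing a $k$-matching $M$ with a Tutte obstruction in $G-V(M)$. Case~1 ($G[S]$ contains a $k$-matching) is correct and clean. The rest, however, has a genuine gap precisely where you flag the ``main obstacle''. The claim that one can choose $k-p$ edges inside a factor-critical component $C_i$ so that deleting their endpoints ``shatters $C_i$ into many odd pieces'' is unjustified: a factor-critical graph is $2$-edge-connected and can be highly connected, and removing the vertices of a small matching need not create any new odd component at all. Likewise, the promised bookkeeping that would force an essentially bipartite structure is not carried out, and you are working only with $\delta(G)\ge k+1$, whereas the paper's short proof hinges on the much stronger $\delta(G)\ge\kappa(G)\ge 2k$ available once $k\ge\nu/4$. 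As written, your proposal is a plausible plan whose decisive step is missing; the paper's maximal-$S$ trick plus Lemma~\ref{lmly1} bypasses exactly this difficulty.
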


Following this direction, we give a better lower bound of $k$ and
show that it is the best possible. Furthermore, we show a similar
equivalent relationship between $(2k+1)$-factor-critical graphs
and $k\frac{1}{2}$-extendable graphs.

The following lemmas will be used in the proofs of the main
results.
\begin{lemma}\label{lmp1}
$($Plummer \cite{P1}$)$ If $G$ is a $k$-extendable graph on $\nu\geq
2k+2$ vertices where $k\geq 1$, then $G$ is also $(k-1)$-extendable.
\end{lemma}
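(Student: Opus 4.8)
The plan is to verify the three defining conditions of $(k-1)$-extendability. Connectivity is part of the hypothesis, and since $G$ is $k$-extendable it contains a matching of size $k$, so deleting one edge of it gives a matching of size $k-1$; hence only the extension property is at issue. So I would fix an arbitrary matching $N$ of size $k-1$, set $T=V(G)\setminus V(N)$, and record that $|T|=\nu-2(k-1)\ge 4$ by the hypothesis $\nu\ge 2k+2$. The goal is then to show $N$ lies in a perfect matching of $G$.

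First I would dispose of the easy case: if $G[T]$ contains an edge $xy$, then $N\cup\{xy\}$ is a matching of size $k$, so by $k$-extendability it is contained in a perfect matching $F$, and then $N\subseteq F$, as required.

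The substantive case is when $G[T]$ has no edge, i.e. $T$ is independent, equivalently $N$ is a \emph{maximal} matching of $G$. Since $G$ is $k$-extendable it possesses a matching of size $k>|N|$, so $N$ is not a maximum matching; hence (Berge) there is an $N$-augmenting path $P$, and both endpoints of $P$, being $N$-unsaturated, lie in $T$. Augmenting along $P$ produces a matching $N'=N\triangle E(P)$ of size $k$ with $V(N')=V(N)\cup\{x,y\}$ for two distinct vertices $x,y\in T$. By $k$-extendability $N'$ sits inside a perfect matching, so $G-V(N')=G[T\setminus\{x,y\}]$ must have a perfect matching. But $T\setminus\{x,y\}$ is a nonempty independent set (here $|T|\ge 4$ is used), so $G[T\setminus\{x,y\}]$ has no perfect matching — a contradiction. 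Therefore $T$ cannot be independent, and together with the easy case this shows every matching of size $k-1$ extends.

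The one point that needs a moment's thought, and the only real obstacle, is the handling of the case $V(G)\setminus V(N)$ independent: in that case there is \emph{no} $k$-matching containing $N$, so one cannot directly invoke $k$-extendability on a superset of $N$. The device that resolves it is to use \emph{any} matching of size $k$ in $G$, compare it with $N$, extract an augmenting path, and augment; the resulting $k$-matching covers $V(N)$ together with exactly two additional vertices of $T$, and its non-extendability is immediate because what remains is a large independent set. The inequality $|T|\ge 4$, i.e. $\nu\ge 2k+2$, is invoked exactly once — to ensure that this leftover independent set is nonempty, which is precisely what prevents it from having a perfect matching.
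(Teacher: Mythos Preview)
Your argument is correct. The two-case split on whether $T=V(G)\setminus V(N)$ spans an edge is the natural approach, and your handling of the independent case via an $N$-augmenting path is sound: the key identity $V(N')=V(N)\cup\{x,y\}$ holds because the internal vertices of the augmenting path are already $N$-saturated, and the contradiction from $G[T\setminus\{x,y\}]$ being a nonempty independent set is clean. Your use of $\nu\ge 2k+2$ to guarantee $|T|\ge 4$ (hence $T\setminus\{x,y\}\ne\emptyset$) is exactly where that hypothesis enters.

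As for comparison with the paper: note that the paper does \emph{not} supply a proof of this lemma. It is quoted as a known result of Plummer \cite{P1} and used as a black box (only to derive Lemma~\ref{lmz1} by iteration). So there is no in-paper argument to compare against. Your proof is essentially the standard one found in Plummer's original paper, so you have reconstructed the intended argument.

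One small wording quibble: in your final summary paragraph you write ``its non-extendability is immediate'', referring to $N'$; strictly speaking $N'$ \emph{is} extendable by hypothesis, and it is precisely this that yields the contradiction. The body of your argument states this correctly, so this is only a matter of phrasing in the recap.
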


\begin{lemma}\label{lmy4}
$($Yu \cite{Y3}$)$ If $G$ is a $k\frac{1}{2}$-extendable graph,
then $G$ is also $(k-1)\frac{1}{2}$-extendable.
\end{lemma}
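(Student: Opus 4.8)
The plan is to verify directly the two conditions in the definition of $(k-1)\frac{1}{2}$-extendability, using those for $k\frac{1}{2}$-extendability. Condition~(1) is immediate: for any vertex $v$, $G-v$ contains a matching of size $k$, and deleting an edge from it gives a matching of size $k-1$ in $G-v$. For condition~(2), fix a vertex $v$ and a matching $M$ of size $k-1$ in $G-v$, and set $U:=V(G-v)\setminus V(M)$, so $|U|=(\nu-1)-2(k-1)=\nu-2k+1$; this is even, and under the standing range convention for $k\frac{1}{2}$-extendability ($\nu\geq 2k+3$, in parallel with the $\nu\geq 2k+2$ hypothesis in Lemma~\ref{lmp1}) it satisfies $|U|\geq 4$. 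The basic observation is that $M$ is contained in a perfect matching of $G-v$ if and only if $G[U]$ has a perfect matching. Consequently, if $G[U]$ has an edge $e$, then $M\cup\{e\}$ is a matching of size $k$ in $G-v$, which by condition~(2) for $k\frac{1}{2}$-extendability lies in a perfect matching of $G-v$; that perfect matching contains $M$, and we are done.

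The remaining task — and the only genuine obstacle — is to show that $U$ cannot be independent. Suppose it were. Conditions~(1) and~(2) for $k\frac{1}{2}$-extendability together force $G-v$ to have a perfect matching $F$. Consider the symmetric difference $H:=M\,\triangle\,F$: each of its components is a path or an even cycle, and since $F$ covers all of $V(G-v)$ while $M$ misses exactly $U$, the path components of $H$ are precisely the $M$-augmenting paths, and their endpoints are exactly the vertices of $U$. Pick one such path $P$, with endpoints $a,b\in U$, and put $M':=M\,\triangle\,E(P)$. Then $M'$ is a matching of size $k$ in $G-v$ whose uncovered vertex set is $U\setminus\{a,b\}$, which is a nonempty independent set (of size $|U|-2\geq 2$); hence $G[U\setminus\{a,b\}]$ has no perfect matching, and by the observation above $M'$ lies in no perfect matching of $G-v$ — contradicting condition~(2) for $k\frac{1}{2}$-extendability. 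Therefore $U$ is not independent, which by the previous paragraph completes the verification of condition~(2); thus $G$ is $(k-1)\frac{1}{2}$-extendable.

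The step needing the most care is the independent-$U$ case: extending $M$ by a single edge is unavailable there, and the workable device is to pass instead to a strictly larger matching $M'$ whose uncovered set is still a large independent set, which violates $k\frac{1}{2}$-extendability outright. The bound $|U|\geq 4$ is used both to guarantee an augmenting path and to keep $U\setminus\{a,b\}$ nonempty, and it is genuinely needed — when $|U|=2$ the statement can fail, e.g.\ $C_5$ meets conditions~(1)--(2) with $k=2$ but not with $k=1$.
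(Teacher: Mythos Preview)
The paper does not supply its own proof of this lemma; it is quoted from Yu \cite{Y3} as a preliminary result, so there is no in-paper argument to compare against. Your proof is self-contained and correct. The augmenting-path device in the independent-$U$ case is exactly the right idea: if $U$ spans no edge you cannot enlarge $M$ locally, so you pass through the symmetric difference $M\triangle F$ to manufacture a size-$k$ matching $M'$ whose uncovered set is still independent and nonempty, which directly contradicts condition~(2). All the bookkeeping (that the degree-one vertices of $M\triangle F$ are precisely $U$, that each path component is $M$-augmenting, that $M'$ misses exactly $U\setminus\{a,b\}$) is accurate.

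Your remark on the range hypothesis is well taken and worth making explicit. The lemma, as printed here, carries no bound on $\nu$, but your $C_5$ example shows it fails at $\nu=2k+1$; the statement is only true under $\nu\geq 2k+3$, which is the natural analogue of the $\nu\geq 2k+2$ hypothesis in Lemma~\ref{lmp1} and is presumably the standing convention in \cite{Y3}. Since the present paper only invokes the lemma (via Lemma~\ref{lmz1}) in Theorem~\ref{th23}, where the relevant matchings have size at most $k$ in a graph of order $\nu\geq 4k+3$, the missing hypothesis causes no trouble downstream---but flagging it, as you do, is the right call.
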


\begin{lemma}\label{lmly1} $($Lou and Yu \cite{LY}$)$ If $G$
is a $k$-extendable graph with $k\geq \nu/4$, then either $G$ is
bipartite or $\kappa(G)\geq 2k$.
\end{lemma}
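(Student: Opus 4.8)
The plan is to argue by contradiction. Suppose $G$ is $k$-extendable with $k\ge \nu/4$, that $G$ is not bipartite, yet $\kappa(G)=t\le 2k-1$. First I would record the cheap facts: $G$ has a perfect matching (being $0$-extendable); $G$ is $(k+1)$-connected (Plummer~\cite{P1}), so $\delta(G)\ge k+1$ and $k+1\le t\le 2k-1$, whence $k\ge 2$; by Lemma~\ref{lmp1} $G$ is $j$-extendable for all $0\le j\le k$, and being $2$-extendable and non-bipartite on $\nu\ge 6$ vertices it is in fact bicritical by a theorem of Plummer. Fix a minimum cut $S$ with $|S|=t$ and let $C_1,\dots,C_m$ with $m\ge 2$ be the components of $G-S$; note $\sum_i|C_i|=\nu-t\le 4k-t$, which is the quantitative slack the argument will exploit.

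The engine is the following blocking construction. I want a matching $N$ with $|N|=k$ and $S\subseteq V(N)$. Given such an $N$, each $C_i\setminus V(N)$ is a union of components of $G-V(N)$, so if in addition $|C_i|-|C_i\cap V(N)|$ is odd for some $i$ --- which is automatic for an odd $C_i$ as long as $|C_i\cap V(N)|$ stays even, and can be forced for an even $C_i$ by routing one $S$--$C_i$ edge into $N$ --- then $G-V(N)$ has an odd component and hence no perfect matching, contradicting $k$-extendability. To produce $N$, I would start from a matching covering $S$ built from a maximum matching of $G[S]$ together with edges from the leftover vertices of $S$ to the $C_i$'s, perform at most one edge swap to correct the parity of a chosen component, and then pad up to size exactly $k$ using edges interior to the largest component(s). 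The inequalities $t\le 2k-1$ and $\nu\le 4k$ are exactly what make the bookkeeping close: the ``cover plus parity'' part of $N$ has size roughly $t/2<k$, leaving enough vertices in the components (here $\delta(G)\ge k+1$ helps) to pad to $k$ without running out.

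The hard part, and the point at which non-bipartiteness becomes indispensable, is that this construction can fail: it does so precisely when $S$ cannot be covered by a small matching --- the extreme case being $S$ independent, and more generally when some vertices of $S$ see only one component --- so that no matching of size $\le k$ covers $S$ at all. I would quarantine these residual configurations and show, by a more careful choice of the blocking matching (allowing an edge interior to a component, or a pair of $S$-incident edges reaching two distinct components, and using bicriticality to keep the surviving graph under control), that the only way for all of them to simultaneously avoid producing an unextendable $k$-matching is for $V(G)$ to split into $S$ and $\bigcup_i C_i$ with no edge inside either class --- that is, for $G$ to be bipartite, against our hypothesis. This residual analysis, with its parity and degree bookkeeping, is the technical heart of the argument; the rest is routine, and $k\ge\nu/4$ is used only to keep the counting of the previous paragraph from failing.
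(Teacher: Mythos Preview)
The paper does not prove Lemma~\ref{lmly1}; it is quoted from Lou and Yu~\cite{LY} as a preliminary result and used as a black box in the proof of Theorem~\ref{th21}. There is therefore no in-paper argument to compare your proposal against.

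As for the proposal itself: the overall strategy --- take a minimum cut $S$ of size $t\le 2k-1$, build a $k$-matching $N$ that saturates $S$ and leaves some component of $G-S$ with odd residue, and thereby contradict $k$-extendability --- is indeed the shape of the argument in~\cite{LY}. But what you have written is a plan, not a proof. You explicitly flag the ``hard part'' (when $S$ cannot be covered by a matching of size at most $k$, e.g.\ $S$ independent with lopsided neighbourhoods) and then assert that a ``more careful choice of the blocking matching'' forces the bipartite structure, without carrying out that analysis. That residual case is exactly where the work lies: one must control how the unmatched vertices of $S$ distribute their neighbours among the $C_i$, argue about parities of the $|C_i|$, and show that every non-bipartite configuration admits \emph{some} bad $k$-matching. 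Your sketch names the right ingredients (bicriticality, $\delta(G)\ge k+1$, the budget $\nu-t\le 4k-t$) but does not assemble them; in particular, the claimed endpoint ``no edge inside either class'' would force every $C_i$ to be a singleton, and you give no mechanism that drives the argument to that extreme. Until the residual analysis is written out, the proposal has a genuine gap.
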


\begin{lemma}\label{lmz1}
If $G$ is a $k$-extendable graph, then $G$ is also $m$-extendable
for all integers $0\leq m\leq k$. If $G$ is a
$k\frac{1}{2}$-extendable graph, then $G$ is also
$m\frac{1}{2}$-extendable for all integers $0\leq m\leq k$.
\end{lemma}
\begin{proof} Apply repeatedly Lemma \ref{lmp1} and Lemma \ref{lmy4}.
\end{proof}

\section{Equivalence between extendibility and \\ factor-criticality}
\begin{theorem}\label{th21}
If $k\geq(\nu+2)/4$, then a non-bipartite graph $G$ is
$k$-extendable if and only if it is $2k$-factor-critical .
\end{theorem}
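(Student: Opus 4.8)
The plan is to prove the two directions separately. One direction is immediate from Theorem~\ref{ob1}: if $G$ is $2k$-factor-critical, then it is $k$-extendable (and $2k$-factor-criticality for $k\ge 1$ forces $G$ to be non-bipartite, since a bipartite graph on $\nu$ vertices cannot even be $2$-critical). So the substance of the theorem lies in the forward direction: assuming $G$ is $k$-extendable, non-bipartite, and $k\ge(\nu+2)/4$, we must show $G$ is $2k$-factor-critical, i.e.\ $G-S$ has a perfect matching for every $S\subseteq V(G)$ with $|S|=2k$. Note that the hypothesis $k\le(\nu-2)/2$ built into the definition of $k$-extendability gives $\nu\ge 2k+2$, so $|V(G)\setminus S|=\nu-2k\ge 2$ is even, and we are asking for a perfect matching of the graph $G-S$ on $\nu-2k$ vertices.

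The natural strategy: first use the connectivity leverage that large $k$ provides. Since $k\ge(\nu+2)/4>\nu/4$, Lemma~\ref{lmly1} applies, and as $G$ is non-bipartite we get $\kappa(G)\ge 2k$. Now fix $S$ with $|S|=2k$ and suppose for contradiction that $G-S$ has no perfect matching. Apply Tutte's theorem (via the Berge--Tutte formula) to $G-S$: there is a set $T\subseteq V(G)\setminus S$ such that the number of odd components of $(G-S)-T$ exceeds $|T|$; by a parity argument, since $\nu-2k$ is even, the deficiency is at least $2$, so $o\bigl((G-S)-T\bigr)\ge|T|+2$. Write $H=G-S-T$ and let $C_1,\dots,C_r$ be its components with $r\ge o\ge|T|+2$. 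The idea is then to build, from the pieces $S$, $T$, and the $C_i$, a matching of size $k$ in $G$ that cannot extend to a perfect matching of $G$, contradicting $k$-extendability. Roughly, one wants to "saturate" the obstruction: pick one vertex from many of the odd components, match the components' interiors as far as possible (each odd $C_i$ leaves one vertex unmatched internally), and use edges into $S\cup T$ to absorb the leftover parity, arranging that after using exactly $k$ edges there are still strictly more unmatched odd-component-vertices than available vertices of $S\cup T$ to cover them. Here the counting $|S|+|T|=2k+|T|$ against $r-1\ge|T|+1$ components (minus those we can pair up) is where the bound $k\ge(\nu+2)/4$, equivalently $\nu\le 4k-2$, must be invoked to guarantee that the components are numerous/small enough for the obstruction to survive after removing a size-$k$ matching.

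The main obstacle I anticipate is the bookkeeping in that last step: one must choose the size-$k$ matching in $G$ carefully so that (a) it really has size exactly $k$, (b) it leaves a Tutte-type obstruction in $G$ minus the matched vertices, and (c) all of this is consistent with $\kappa(G)\ge 2k$ and with $\nu\le 4k-2$. In particular, controlling the total number of vertices, $\nu=|S|+|T|+\sum|C_i|$, and comparing $\sum|C_i|\ge r$ (with better bounds when some $C_i$ are large) against $\nu-|S|-|T|=\nu-2k-|T|\le 2k-2-|T|$, is the crux: this forces $r$ to be large relative to $|T|$ only when the components are small, and one has to handle the case of a few large odd components separately (there the connectivity $\kappa(G)\ge 2k$ and the abundance of edges should let us reroute). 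I would first treat the clean case where all odd components are singletons, get the contradiction there, and then argue the general case reduces to it or yields an even easier count.
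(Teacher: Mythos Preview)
Your setup through Lemma~\ref{lmly1} (obtaining $\kappa(G)\ge 2k$ and hence $\delta(G)\ge 2k$) agrees with the paper, but the arguments diverge afterward, and the obstacle you flag is a genuine missing idea rather than bookkeeping. Your plan is to exhibit a size-$k$ matching $M$ with $G-V(M)$ imperfect; but that is equivalent to exhibiting a $2k$-set $S^{*}=V(M)$ such that $G[S^{*}]$ has a perfect matching while $G-S^{*}$ does not. The Tutte barrier for your fixed $S$ does not hand you such an $S^{*}$. Already in the simple case $T=\emptyset$ with two odd components of sizes $1$ and $3$ (so $\nu-2k=4$), your recipe of matching component interiors and then padding to size $k$ leaves two component-representatives and two leftover $S$-vertices; since each representative has all of its $\ge 2k$ neighbours inside $S$, it will typically be adjacent to those leftovers, and no obstruction survives in $G-V(M)$ without further control on $G[S]$. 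The case split on component sizes does not supply that control.

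The paper never invokes Tutte. Its key device is an \emph{extremal} choice of $S$: among all $2k$-sets with $G-S$ imperfect, take one maximising the matching number $r_0$ of $G[S]$. Then $r_0\le k-1$, since a perfect matching of $G[S]$ would itself be a non-extendable size-$k$ matching. A single swap---replace an $M_S$-unsaturated vertex $u_2\in S$ by $v_1\in V(G-S)$, the $M$-partner of another unsaturated vertex $u_1$, where $M\supseteq M_S$ is a perfect matching of $G$ supplied by Lemma~\ref{lmz1}---forces $r_0=k-1$. Consequently $G-S$ misses a perfect matching by exactly one edge, with precisely two unsaturated vertices $v_1,v_2$ and $v_1v_2\notin E(G)$. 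The finish is a direct degree count on $v_1,v_2$: for each edge $x_{2i-1}x_{2i}$ of $M_S$ and each edge $y_{2j-1}y_{2j}$ of the near-perfect matching of $G-S$ one checks $e(\{v_1,v_2\},\{x_{2i-1},x_{2i}\})\le 2$ and $e(\{v_1,v_2\},\{y_{2j-1},y_{2j}\})\le 2$, giving $d(v_1)+d(v_2)\le 2(k-1)+2r+4\le 4k-2$, contradicting $\delta(G)\ge 2k$. The extremal choice of $S$ is the idea you are missing: it pins the deficiency of $G-S$ to exactly $2$ and reduces everything to a two-vertex degree inequality, with no Tutte decomposition or component-by-component analysis needed.
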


\begin{proof} We only need to prove that if $k\geq(\nu+2)/4$, a $k$-extendable
non-bipartite graph is $2k$-factor-critical.

Let $G$ be a $k$-extendable non-bipartite graph satisfying
$k\geq(\nu+2)/4$ but not $2k$-factor-critical. Then, there exists
a vertex set $S\subseteq V(G)$ with order $2k$, such that $G-S$
has no perfect matching. Moreover, we choose $S$ so that the size
of the maximum matching of $G[S]$ has the maximum value $r_0$.
Clearly, $r_0\leq k-1$.

Let $M_S$ be a maximum matching of $G[S]$, then there exists two
vertices $u_{1}$ and $u_{2}$ in $G[S]$ that are not covered by
$M_S$. By Lemma \ref{lmz1}, $M_S$ is contained in a perfect
matching $M$ of $G$. Let $u_{i}v_{i}\in M$, where $v_{i}\in
V(G-S)$, $i=1$, 2. Let $S^{\prime}=(S\backslash \{u_{2})\}\cup
\{v_{1}\}$. Then $M_S\cup\{u_1v_1\}$ is a matching of
$G[S^\prime]$ of size $r_0+1$. By the choice of $S$,
$G-S^{\prime}$ has a perfect matching $M_{\bar{S^{\prime}}}$, and
$|M_{\bar{S^{\prime}}}|\leq k-1$. By Lemma \ref{lmz1},
$M_{\bar{S^{\prime}}}$ is contained in a perfect matching
$M^\prime$ of $G$. Clearly, $M^\prime\cap E(G[S^\prime])$ is a
perfect matching of $G[S^\prime]$ and $M^\prime\cap E(G[S])$ is a
matching of $G[S]$ of size $k-1$. Therefore, $r_0=k-1$. Then
$M_{\bar{S}}=M\cap E(G-S)$ is a maximum matching of $G-S$ of size
$r=|V(G-S)|/2-1\leq k-2$, and $v_1v_2\notin E(G)$.

Let $M_S=\{x_{1}x_{2},\ldots, x_{2k-3}x_{2k-2}\}$,
$M_{\bar{S}}=\{y_{1}y_{2},\ldots, y_{2r-1}y_{2r}\}$. If $v_{1}y_{1}$,
$v_{2}y_{2}\in E(G)$, then $M_{\bar{S}}\cup \{v_{1}y_{1},
v_{2}y_{2}\}\backslash{y_{1}y_{2}}$ is a perfect matching of $G-S$,
contradicting our assumption. Hence $|\{v_{1}y_{1}, v_{2}y_{2}\}\cap
E(G)| \leq 1$. Similarly, $|\{v_{1}y_{2}, v_{2}y_{1}\}\cap E(G)|
\leq 1$. So $e(\{v_{1}, v_{2}\}, \{y_{1},y_{2}\})\leq 2$. Similarly,
$e(\{v_{1}, v_{2}\}, \{y_{2i-1},y_{2i}\})\leq 2$ for $1\leq i \leq
r$.

If $v_{1}x_{1}, v_{2}x_{2}\in E(G)$, then $M_{\bar{S}}\cup
\{v_{1}x_{1}, v_{2}x_{2}\}$ is a matching of $G$ of size no more
than $k$. By Lemma \ref{lmz1}, $M_{\bar{S}}\cup \{v_{1}x_{1},
v_{2}x_{2}\}$ is contained in a perfect matching
$M^{\prime\prime}$ of $G$. But then $(M^{\prime\prime}\cap
E(G[S]))\cup \{x_{1}x_{2}\}$ is a perfect matching of $G[S]$, a
contradiction. Hence, $|\{v_{1}x_{1}, v_{2}x_{2}\}\cap E(G)|\leq
1$. Similarly, $|\{v_{1}x_{2}, v_{2}x_{1}\}\cap E(G)|\leq 1$. So
$e(\{v_{1}, v_{2}\}, \{x_{1},x_{2}\})\leq 2$. Similarly,
$e(\{v_{1}, v_{2}\}, \{x_{2i-1},x_{2i}\})\leq 2$ for $1\leq i\leq
k-1$.

Then we have
$$d(v_{1})+d(v_{2})
\leq 2(k-1)+2r+4 \leq 2(k-1)+2(k-2)+4 = 4k-2.
$$

But by Lemma \ref{lmly1}, $\delta(G)\geq \kappa(G) \geq 2k$. So
$d(v_{1})+d(v_{2})\geq 2k+2k=4k$, a contradiction.
\end{proof}

To show that the lower bound in Theorem \ref{th21} is best
possible, we consider the following class of graphs. Let $G^{(k)}
=(K_{2k-1}\cup K_{1})\vee (K_{2k-1}\cup K_{1})$, $k\geq 2$. Then
$\nu(G^{(k)})=4k$ and $G^{(k)}$ is non-bipartite.

\begin{theorem}\label{th22}
$G^{(k)}$ is $k$-extendable but not $2k$-factor-critical.
\end{theorem}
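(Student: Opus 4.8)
The plan is to verify the two claims about $G^{(k)}=(K_{2k-1}\cup K_1)\vee(K_{2k-1}\cup K_1)$ separately, first the failure of $2k$-factor-criticality (the easy direction) and then $k$-extendability (the direction needing care).

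For the non-criticality, I would exhibit an explicit bad set $S$ of size $2k$. Write $G^{(k)}=(A\cup\{a\})\vee(B\cup\{b\})$ where $G[A]\cong G[B]\cong K_{2k-1}$ and $a,b$ are the two singleton vertices. Since $a$ is joined only to $B\cup\{b\}$ and $b$ only to $A\cup\{a\}$, take $S$ to consist of $a$, $b$, and some $2k-2$ further vertices chosen so that $G-S$ has an odd number of vertices in one of the "large" parts while isolating a vertex there — concretely, remove all of $B$ together with $a$ and one vertex of $A$ (that is $|S|=(2k-1)+1+1=2k+1$, so instead remove all of $B$ and $a$ only, giving $|S|=2k$, and check parity/connectivity of $G-S$). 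The point is that after deleting one whole side plus a carefully chosen vertex, $G-S$ either has odd order or contains a component with no way to be matched; a short parity/Tutte-type count finishes it. I expect this to be a one-paragraph argument once the right $S$ is pinned down.

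For $k$-extendability, I must show $\nu(G^{(k)})=4k\geq 2k+2$ (true for $k\geq1$), that $G^{(k)}$ has a matching of size $k$ (clear, since each $K_{2k-1}$ alone contains $k-1$ independent edges and there are plenty of cross edges), and — the substantive part — that every matching $N$ of size $k$ extends to a perfect matching. Given such $N$, let $N_A$, $N_B$, $N_{AB}$ denote its edges inside $A\cup\{a\}$, inside $B\cup\{b\}$, and across. After removing $V(N)$, I need a perfect matching of the remainder. The key structural fact is that $G^{(k)}$ minus any set of vertices that leaves each side "not too unbalanced" is easily matched using the complete bipartite-like connection between the two sides together with the internal completeness of $K_{2k-1}$; the only obstructions are parity and the isolated vertices $a,b$, which can only be matched across. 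So the heart of the argument is a case analysis on how many of $a,b$ lie in $V(N)$ and on the parities of the number of unmatched vertices remaining in $A$ and in $B$, showing in each case that one can match leftover $A$-vertices to leftover $B$-vertices and pair up the rest internally.

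The main obstacle is the book-keeping in this last case analysis: one must confirm that after deleting the $2k$ vertices of $N$, neither side is left with so many vertices that the cross-edges cannot absorb the parity mismatch, and that $a$ (resp.\ $b$), if unmatched by $N$, still has an available neighbour on the opposite side. Because each side has $2k$ vertices and $|V(N)|=2k$, the worst imbalance is bounded, and a counting check (of the type $|A\setminus V(N)|+|B\setminus V(N)|=2k$ with controlled difference) shows the remainder always has a perfect matching. I would organize this as: (i) reduce to the case where $N$ saturates neither $a$ nor $b$ or handle those saturations directly; (ii) show the two "residual" vertex sets on the two sides have equal size after transferring at most one cross pair; (iii) match residue-to-residue across and internally, invoking completeness of each $K_{2k-1}$. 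This yields the perfect matching containing $N$, completing the proof that $G^{(k)}$ is $k$-extendable but not $2k$-factor-critical. \qed
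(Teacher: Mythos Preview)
Your proof of the non-criticality direction has a genuine gap. The concrete set $S=B\cup\{a\}$ that you settle on does \emph{not} work: after deleting all of $B$ and the vertex $a$, what remains is $A\cup\{b\}$, and since $b$ is joined to every vertex of $A$ (being on the opposite side of the join) while $A$ induces $K_{2k-1}$, the graph $G^{(k)}-S$ is a complete graph $K_{2k}$ and certainly has a perfect matching. The parity/Tutte count you allude to will not produce an obstruction here. The right choice of $S$ is simpler than anything you tried: take $S=V(G_1)=A\cup\{a\}$, one entire side of the join. Then $G^{(k)}-S=G_2=K_{2k-1}\cup K_1$, in which $b$ is isolated, so there is no perfect matching. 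This is exactly what the paper does, in one line.

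For the $k$-extendability direction your plan is workable in principle but noticeably heavier than the paper's argument. The paper avoids any case analysis on whether $a$ or $b$ is saturated. Instead, with $k_i=|M\cap E(G_i)|$ and (say) $k_1\geq k_2$, it observes that $G_2-V(M)$ contains a matching $M'$ of size $k_1-k_2$ (a one-line floor count using $|V(G_2)-V(M)|\geq 2k-(k-k_1-k_2)-2k_2$ and the fact that all but at most one of these vertices lie in the clique $K_{2k-1}$). Deleting $V(M)\cup V(M')$ then leaves exactly the same number of vertices on each side, and the join supplies a perfect cross-matching. Your case split on the saturation of $a,b$ and residual parities would eventually arrive at the same conclusion, but the balancing-matching trick in the paper packages all of that into a single inequality.
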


\begin{proof} Let $G_{1}$ and $G_{2}$ be two copies of $K_{2k-1}\cup K_{1}$
and $G^{(k)}=G_{1}\vee G_{2}$. $G^{(k)}$ is not
$2k$-factor-critical, since $G_{2}=G^{(k)}-V(G_{1})$ does not have
a perfect matching. Now we prove that $G^{(k)}$ is $k$-extendable.

Let $M$ be a matching of size $k$ in $G$, we show that
$G^{(k)}-V(M)$ has a perfect matching. Let $|M\cap E(G_{i})|=k_{i}$,
$i=1,2$, then $k_{1},k_{2}\leq k-1$. Without lose of generality we
suppose $k_{1}\geq k_{2}$. The size of the maximum matching in
$G_{2}-V(M)$ is no less than
$\lfloor(2k-1-(k-k_{1}-k_{2})-2k_{2})/2\rfloor = \lfloor((k-1)+(k_{1}+k_{2})-2k_{2})/2\rfloor
\geq \lfloor (2k_{1}-2k_{2})/2 \rfloor
= k_{1}-k_{2}.$
Therefore we can find a matching $M^{\prime}$ of size $k_{1}-k_{2}$ in
$G_{2}-V(M)$.

In $G^{(k)}-V(M)-V(M^\prime)$, half of the vertices
are from $G_1$ and the other half are from $G_2$, hence we can
find nonadjacent edges from $G_{1}$ to $G_{2}$ covering all vertices in
it. So we get a perfect matching in $G^{(k)}-V(M)$ and $G^{(k)}$
is $k$-extendable.
\end{proof}

Now we divert our attention to $k\frac{1}{2}$-extendable graphs and
$(2k+1)$-factor-critical graphs. Note that by definition a
$k\frac{1}{2}$-extendable graph can never be bipartite.

\begin{theorem}\label{th23}
If $k\geq(\nu-3)/4$, then a graph $G$ is $k\frac{1}{2}$-extendable
if and only if it is $(2k+1)$-factor-critical.
\end{theorem}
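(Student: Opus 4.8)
The plan is to mimic the structure of the proof of Theorem~\ref{th21}, making the adjustments forced by the odd-order, half-integer setting. By Theorem~\ref{ob1} one direction is already known, so it suffices to show that a $k\frac{1}{2}$-extendable graph $G$ with $k\geq(\nu-3)/4$ is $(2k+1)$-factor-critical. Suppose not: then there is a set $S\subseteq V(G)$ with $|S|=2k+1$ such that $G-S$ has no perfect matching, and among all such sets we pick one maximizing the size $r_0$ of a maximum matching $M_S$ of $G[S]$. Since $G-S$ has even order $\nu-(2k+1)$ (note $\nu$ is odd here, consistent with $k\frac{1}{2}$-extendability) but no perfect matching, and $|S|$ is odd, $M_S$ leaves an odd number of vertices of $S$ uncovered, so $r_0\le k-1$; in fact I expect the argument will again force $r_0=k-1$, leaving exactly three uncovered vertices $u_1,u_2,u_3$ in $S$.

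Next I would extract a perfect matching of $G-w$ for a suitable vertex $w$ using $k\frac{1}{2}$-extendability. Concretely, pick $w$ to be one of the uncovered vertices of $S$, say $w=u_3$; then $M_S$ is a matching of size $k-1$ in $G-w$, and by Lemma~\ref{lmz1} (its $k\frac{1}{2}$-extendable half) we can enlarge $M_S$ to a matching of size $k$ in $G-w$ and then into a perfect matching $M$ of $G-w$. As in Theorem~\ref{th21}, let $u_iv_i\in M$ with $v_i\in V(G-S)$ for $i=1,2$. Swapping $u_2$ out of $S$ and $v_1$ in (and keeping $w=u_3$ outside), apply the maximality of $r_0$ to the modified set to deduce $r_0=k-1$, $v_1v_2\notin E(G)$, and that $M_{\bar S}=M\cap E(G-S)$ is a maximum matching of $G-S$ of size $r=(\nu-2k-3)/2\le k-2$ (using $k\ge(\nu-3)/4$, i.e. $\nu\le 4k+3$; here the numerology $\nu-(2k+1)=2r+3$ replaces the $2r+2$ of the even case). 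The key degree-counting step is then: for each pair $\{y_{2i-1},y_{2i}\}$ of $M_{\bar S}$ one shows $e(\{v_1,v_2\},\{y_{2i-1},y_{2i}\})\le 2$ by the same alternating-path/augmenting argument (using $v_1v_2\notin E(G)$), and for each pair $\{x_{2i-1},x_{2i}\}$ of $M_S$ one shows $e(\{v_1,v_2\},\{x_{2i-1},x_{2i}\})\le 2$ by the swap-into-$S$ argument. The one genuinely new ingredient is bounding the contribution of $v_1,v_2$ to the three uncovered vertices $u_1,u_2,u_3$ of $S$: I expect $e(\{v_1,v_2\},\{u_1,u_2,u_3\})$ can be at most $6$ trivially, but a sharper bound (something like $\le 4$, or a bound obtained by pairing $v_2$ with one $u_j$ and re-running the $r_0$-maximality argument) is what will make the inequality close.

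Assembling these bounds gives $d(v_1)+d(v_2)\le 2(k-1)+2r+(\text{contribution from }u_1,u_2,u_3)$, which under $r\le k-2$ and the needed bound on the last term should be at most something like $4k+2$, while connectivity forces a contradiction. For the lower bound on $d(v_1)+d(v_2)$ I would invoke an analogue of Lemma~\ref{lmly1} for half-integer extendability: since $G$ is $k\frac{1}{2}$-extendable it is in particular $k$-extendable-like enough that, with $k\ge(\nu-3)/4\ge\nu/4-1$ roughly, one gets $\kappa(G)\ge 2k+1$ (a $k\frac{1}{2}$-extendable graph on $\nu$ vertices behaves like a $k$-extendable graph, and the connectivity bound $\kappa\ge 2k$ should upgrade to $2k+1$ because of the extra odd vertex). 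Then $d(v_1)+d(v_2)\ge 2(2k+1)=4k+2$, and the contradiction requires the upper bound to be strictly below $4k+2$, i.e. the decisive inequality is $2(k-1)+2(k-2)+(\text{contribution from }u_1,u_2,u_3)<4k+2$, forcing the $u$-contribution to be $\le 5$, which should follow from an augmenting-path argument showing $v_1$ and $v_2$ cannot both be joined to all three of $u_1,u_2,u_3$.

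The main obstacle I anticipate is precisely this last point: handling the three (rather than two) uncovered vertices of $S$ and getting a tight enough bound on how $v_1,v_2$ attach to them. In the even case of Theorem~\ref{th21} there were exactly two uncovered vertices and they contributed cleanly; here the parity forces a third uncovered vertex, and one must either find an extra forbidden adjacency among $\{v_1,v_2\}\times\{u_1,u_2,u_3\}$ (via a swap that would increase $r_0$ or produce a perfect matching of $G-S$) or absorb the slack using the stronger connectivity $\kappa(G)\ge 2k+1$. A secondary technical point is verifying the half-integer analogue of Lemma~\ref{lmly1} (connectivity at least $2k+1$); if that is not available off the shelf, one can instead derive $\kappa(G)\ge 2k+1$ directly from the $k\frac{1}{2}$-extendability along the lines of Lou and Yu's proof, or appeal to the relationship between $k\frac{1}{2}$-extendable graphs of order $\nu$ and $k$-extendable graphs of order $\nu-1$ after deleting a vertex. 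Everything else is a routine transcription of the even-order argument with $2k+1$ in place of $2k$ and $2r+3$ in place of $2r+2$.
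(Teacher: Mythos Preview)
Your plan to transcribe the proof of Theorem \ref{th21} does not survive the change in numerology, and the paper in fact uses a completely different (and much shorter) argument.

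The decisive error is arithmetic. With $|S|=2k+1$ and $r_0=k-1$, the matching $M_{\bar S}=M\cap E(G-S)$ misses exactly the two vertices $v_1,v_2$ (not three: $u_3\in S$, so deleting it does not leave an extra hole in $G-S$). Hence $|V(G-S)|=2r+2$, just as in the even case, and $r=(\nu-2k-3)/2$. Under the hypothesis $k\ge(\nu-3)/4$, i.e.\ $\nu\le 4k+3$, this only yields $r\le k$, not $r\le k-2$ as you claim. Consequently the matching $M_{\bar S}\cup\{v_1x_1,v_2x_2\}$ that drives the bound $e(\{v_1,v_2\},\{x_{2i-1},x_{2i}\})\le 2$ has size $r+2$, which may be $k+1$ or $k+2$, too large to invoke $k\frac12$-extendability. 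The $x$-pair bound therefore cannot be established, and the degree count does not close: even granting a connectivity lemma $\kappa(G)\ge 2k+1$ and an optimistic bound on the $u_i$-contribution, the upper bound on $d(v_1)+d(v_2)$ sits at $4k+2$ or above, matching rather than contradicting the lower bound. Your approach would at best recover the weaker range $k\ge(\nu+1)/4$, missing precisely the cases $\nu=4k+1$ and $\nu=4k+3$.

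What the paper does instead is a two-step extension with no degree sums, no connectivity lemma, and no maximizing choice of $S$. Take any $S$ with $|S|=2k+1$ and $G-S$ imperfect; let $r$ be the maximum matching size in $G[S]$ (so $r\le k-1$, since $r=k$ would already force a perfect matching of $G-S$ via $k\frac12$-extendability). Extend a maximum matching $M_S$ of $G[S]$ to a perfect matching $M$ of $G-v_1$ for some $M_S$-uncovered $v_1\in S$; then $M\cap E(G-S)$ has size at most $|V(G-S)|/2-1\le k$. Now pick any $v\in V(G-S)$ missed by $M\cap E(G-S)$ and extend $M\cap E(G-S)$ to a perfect matching $M'$ of $G-v$. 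Only $2k-2r-1$ vertices of $V(G-S)\setminus\{v\}$ remain available to be matched into $S$ by $M'$, so at least $(2k+1)-(2k-2r-1)=2r+2$ vertices of $S$ are matched within $S$, producing a matching of size $r+1$ in $G[S]$ and contradicting the definition of $r$.
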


\begin{proof} We only need to prove that for $k\geq(\nu-3)/4$, a
$k\frac{1}{2}$-extendable graph $G$ is $(2k+1)$-factor-critical.

Suppose that $G$ is a $k\frac{1}{2}$-extendable graph with
$k\geq(\nu-3)/4$, but not $(2k+1)$-factor-critical. Then, there
exists a set $S\subseteq V(G)$ of order $2k+1$, such that there is
no perfect matching in $G-S$. Denote by $r$ the size of the
maximum matching in $G[S]$. Clearly, $r\leq k-1$.

Let $M_S$ be a maximum matching of $G[S]$, and $v_{1}$ be a vertex
of $G[S]$ not covered by $M_S$. Then by Lemma \ref{lmz1}, $M_S$ is
contained in a perfect matching $M$ of $G-v_{1}$. Then $M\cap
E(G-S)$ is a matching of $G-S$ of size at most $|V(G-S)|/2-1\leq
k$.

Let $v$ be a vertex in $G-S$ not covered by $M\cap E(G-S)$, then
$M\cap E(G-S)$ is contained in a perfect matching $M^\prime$ of
$G-v$. But $M^\prime\cap E(G[S])$ is a matching of $G[S]$ of size at
least $r+1$, a contradiction.
\end{proof}

We present a class of graphs below to show that the bound in
Theorem \ref{th23} is best possible. Let $H^{(k)}=I_{k+2}\vee
(K_{k+3}\cup K_{2k})$. Then $\nu(H^{(k)})=4k+5$.

\begin{theorem}\label{th24}
$H^{(k)}$ is $k\frac{1}{2}$-extendable but not
$(2k+1)$-factor-critical.
\end{theorem}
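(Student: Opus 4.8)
The plan is to verify the two properties of $H^{(k)}=I_{k+2}\vee(K_{k+3}\cup K_{2k})$ separately, relying on the fact that the join structure forces a bipartite-like bottleneck at the independent set $I_{k+2}$. First I would dispose of the easy direction: $H^{(k)}$ is \emph{not} $(2k+1)$-factor-critical. Take $S$ to be the vertex set of the $K_{k+3}$ part; then $|S|=k+3$, which is not $2k+1$ in general, so instead I would remove $S$ consisting of all $k+2$ vertices of $I_{k+2}$ together with $k-1$ vertices of $K_{2k}$, giving $|S|=2k+1$. What remains is $K_{k+3}\cup K_{k+1}$, a disjoint union of two odd complete graphs (since $k+3$ and $k+1$ have opposite parity from... in fact both are the parities of $k+1$, so one of $k+3,k+1$ is odd exactly when $k$ is even). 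I would pick the split more carefully: remove the $k+2$ vertices of $I_{k+2}$ and $k-1$ vertices from whichever of $K_{k+3},K_{2k}$ keeps an odd component, leaving $G-S$ disconnected with at least one odd component and hence no perfect matching. Since $\nu=4k+5$ is odd this is consistent with $|S|$ odd.

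For the harder direction, that $H^{(k)}$ is $k\tfrac12$-extendable, I would first check property (1): for every vertex $v$ there is a matching of size $k$ in $H^{(k)}-v$. Since $H^{(k)}$ has $4k+5$ vertices and large complete pieces joined to $I_{k+2}$, a matching of size $k$ is easy to exhibit regardless of which $v$ is deleted (e.g.\ match vertices of $I_{k+2}$ into the complete parts, or just use edges inside $K_{2k}$). Then for property (2), fix a vertex $v$ and a matching $M$ of size $k$ in $H^{(k)}-v$; I must extend $M$ to a perfect matching of $H^{(k)}-v$, which has $4k+4$ vertices, so a perfect matching has $2k+2$ edges and I need $k+2$ more edges covering the $2k+4$ uncovered vertices. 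The key structural point is that every edge of $H^{(k)}$ either lies inside $K_{k+3}$, inside $K_{2k}$, or has exactly one endpoint in $I_{k+2}$; in particular $I_{k+2}$ is independent, so to cover its (up to $k+2$) uncovered vertices I need that many disjoint edges going from $I_{k+2}$ into the complete parts. I would count: after deleting $v$ and the $2k$ vertices of $V(M)$, count how many vertices of $I_{k+2}$, of $K_{k+3}$, and of $K_{2k}$ remain, then greedily match leftover $I_{k+2}$-vertices to leftover complete-part vertices (always possible since complete parts together are large), and finally pair up the still-unmatched complete-part vertices \emph{within} each complete part, which is possible precisely when the leftover count in each complete part has the right parity after the greedy step — and here the two complete parts have sizes of opposite parity ($k+3$ vs $2k$... check mod $2$), so I can always route one cross edge $K_{k+3}$–$I_{k+2}$ or shuffle to fix parity. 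This parity bookkeeping is the main obstacle, and it is exactly where the bound $\nu=4k+5$ (equivalently $k=(\nu-3)/4$ being borderline, so $k\geq(\nu-3)/4$ fails by one) shows its sharpness.

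I would organize the write-up as: (a) one short paragraph exhibiting $S$ for the non-criticality claim; (b) a paragraph verifying condition (1) of $k\tfrac12$-extendability; (c) the main paragraph verifying condition (2) by the counting/greedy-matching argument above, with the parity case analysis. The place I expect to need care is condition (2): I must handle the worst case where as many of the $k$ edges of $M$ as possible are "wasted" sitting inside one complete part or straddling $I_{k+2}$, so that many vertices of $I_{k+2}$ remain uncovered and must be matched outward; I should confirm that even then $|K_{k+3}|+|K_{2k}|-|V(M)\cap(V(K_{k+3})\cup V(K_{2k}))|-[v\notin I_{k+2}]$ is at least the number of uncovered $I_{k+2}$-vertices, and that the residual parities work. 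If a direct parity argument gets messy, a clean alternative is to invoke Theorem~\ref{ob1} in reverse spirit — but since that direction is false, I would instead fall back on a Tutte–Berge style count on $H^{(k)}-v$, bounding the deficiency of any maximal matching containing $M$ by the structure of the single independent set, which makes the sharpness transparent.
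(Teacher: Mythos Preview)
Your overall plan is the same as the paper's---exhibit a bad set $S$ of size $2k+1$ for the non-criticality, and for $k\tfrac12$-extendability remove $\{v\}\cup V(M)$ and show the remainder has a perfect matching by distributing the leftover $I_{k+2}$-vertices between the two complete parts subject to size and parity constraints. However, two concrete errors need fixing.

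\textbf{Non-criticality.} Your choice of $S$ does not work for odd $k$. If you remove all of $I_{k+2}$ together with $k-1$ vertices from a \emph{single} complete part, the two possible remainders are $K_4\cup K_{2k}$ (both even) or $K_{k+3}\cup K_{k+1}$ (both even when $k$ is odd), and in either case a perfect matching exists. You must split the extra $k-1$ deletions between the two complete parts. The paper takes $S=V(I_{k+2})\cup S_1\cup\{u\}$ with $S_1\subseteq V(K_{k+3})$, $|S_1|=k-2$, and $u\in V(K_{2k})$; then $H^{(k)}-S=K_5\cup K_{2k-1}$, two odd components regardless of the parity of $k$. (This also shows an implicit hypothesis $k\ge 2$.)

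\textbf{Extendability and the parity claim.} Your assertion that ``the two complete parts have sizes of opposite parity ($k+3$ vs $2k$)'' is false when $k$ is odd, so you cannot rely on it to ``shuffle to fix parity.'' The paper avoids this by reformulating the task cleanly: with $V_i=V(H_i)\setminus(\{v\}\cup V(M))$, a perfect matching of $H^{(k)}-\{v\}-V(M)$ exists iff $V_1$ can be partitioned into $V_1',V_1''$ with $|V_1'|\le|V_2|$, $|V_1''|\le|V_3|$ and $|V_1'|\equiv|V_2|$, $|V_1''|\equiv|V_3|\pmod 2$. Two facts make this always possible: (i) $|V_2|+|V_3|\ge (k+3)+2k-(2k+1)=k+2\ge|V_1|$, and (ii) $|V_1|\ge 1$, since each edge of $M$ meets $I_{k+2}$ in at most one vertex, so $|(\{v\}\cup V(M))\cap V(I_{k+2})|\le k+1<k+2$. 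Point (ii) is exactly what guarantees you have a free $I_{k+2}$-vertex to route to whichever complete side needs its parity corrected; your write-up should make this explicit rather than leaving it to a ``shuffle'' remark. With (i), (ii), and the evenness of $|V_1|+|V_2|+|V_3|$, the required partition $p=|V_1'|$ can always be found in the interval $[\max(0,|V_1|-|V_3|),\,\min(|V_1|,|V_2|)]$ with the correct parity---no appeal to $k+3$ and $2k$ having opposite parity is needed.
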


\begin{proof} Let $H_{1}=I_{k+2}$, $H_{2}=K_{k+3}$, $H_{3}=K_{2k}$ and $H^{(k)}=H_{1}\vee (H_{2}\cup H_{3})$.

Let $S_{1}$ be a subset of $V(H_{2})$ of order $k-2$ and $u\in
V(H_{3})$. Let $S_0=V(H_{1})\cup S_{1} \cup \{u\}$. Then
$|S_0|=2k+1$ and $H^{(k)}-S_0$ does not have a perfect matching.
Therefore $H^{(k)}$ is not $(2k+1)$-factor-critical.

To prove the $k\frac{1}{2}$-extendibility of $H^{(k)}$, we let
$v\in V(H^{(k)})$, $M$ be a matching of size $k$ in $H^{(k)}-v$
and $S=\{v\}\cup V(M)$. We show that $H^{(k)}-S$ has a perfect
matching.

Let $V_{1}=V(H_{1})-S$, $V_{2}=V(H_{2})-S$ and $V_{3}=V(H_{3})-S$.
The existence of a perfect matching in $H^{(k)}-S$ is equivalent
to the existence of a partition of $V_{1}$ into two subsets
$V_{1}^{\prime}$ and $V_{1}^{\prime\prime}$, such that
$|V_{1}^{\prime}|\leq |V_{2}|$, $|V_{1}^{\prime\prime}|\leq
|V_{3}|$, $|V_{1}^{\prime}|\equiv |V_{2}|$ (mod 2) and
$|V_{1}^{\prime\prime}|\equiv|V_{3}|$ (mod 2). Since
$|V_{1}|+|V_{2}|+|V_{3}|=|V(G)|-(2k+1)$ is even, $|V_{1}|$ and
$|V_{2}|+|V_{3}|$ have the same parity. And since
$|V_{2}|+|V_{3}|\geq (k+3)+2k-(2k+1)=k+2\geq|V_{1}|\geq
k+2-1-k=1$, such a partition can always be obtained. Hence we find
a perfect matching in $H^{(k)}-S$ and $H^{(k)}$ is
$k\frac{1}{2}$-extendable.
\end{proof}

\section{Final remarks}
As we have pointed out earlier, a $k$-extendable bipartite graph
$G$ can not be $n$-factor-critical for any $n>0$. This is because
we can choose a vertex set $S$ of order $n$ so that $G-S$ is not
balanced. However, for $n=2k$, if we keep the two partitions of
$G-S$ balanced when we choose $S$, then $G-S$ does have a perfect
matching. This is a result by Plummer \cite{P2}.

\begin{theorem}\label{lmp2}
Let $G$ be a connected bipartite graph with bipartition (U,W) and
suppose $k$ is a positive integer such that $k\leq \nu/2-1$. Then
$G$ is $k$-extendable if and only if for all $u_{1},\ldots,u_{k}\in
U$ and $w_{1},\ldots,w_{k}\in W$,
$G^{\prime}=G-u_{1}-\cdots-u_{k}-w_{1}-\cdots-w_{k}$ has a perfect
matching.
\end{theorem}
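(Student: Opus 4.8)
The plan is to treat the two directions separately; the ``if'' direction is short, and the ``only if'' direction I would route through a Hall-type reformulation. For the ``if'' direction, suppose the stated condition holds. Applying it with $k$ arbitrary vertices deleted from each side shows $|U|-k=|W|-k$, so $|U|=|W|$, $G$ has a perfect matching, and in particular $G$ contains a matching of size $k$; since $G$ is connected by hypothesis, it remains only to check extendibility. Given any matching $M=\{u_1w_1,\dots,u_kw_k\}$ of size $k$ with $u_i\in U$, $w_i\in W$, the hypothesis gives a perfect matching $N$ of $G'=G-u_1-\cdots-u_k-w_1-\cdots-w_k=G-V(M)$, and $M\cup N$ is a perfect matching of $G$ containing $M$.

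For the ``only if'' direction, let $G$ be $k$-extendable. By Lemma~\ref{lmz1} it is $m$-extendable for every $0\le m\le k$, and it has a perfect matching, so $|U|=|W|=n:=\nu/2$. Then for any choice of the $u_i,w_i$ the graph $G'$ has two equal sides of size $n-k$, so by the one-sided form of Hall's theorem $G'$ has a perfect matching iff $|N_{G'}(X)|\ge|X|$ for every $X\subseteq U\setminus\{u_1,\dots,u_k\}$. Optimizing the deleted vertices against a fixed set $S\subseteq U$ (choose the $u_i$ outside $S$, and the $w_j$ inside $N_G(S)$), one checks that $G'$ has a perfect matching for all admissible choices iff the following Hall-type condition holds: for every nonempty $S\subseteq U$ with $|S|\le n-k$, $|N_G(S)|\ge|S|+k$. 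So the task reduces to proving this condition.

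Suppose it fails for some nonempty $S\subseteq U$ with $|S|\le n-k$; set $d:=|N_G(S)|-|S|$, so $0\le d\le k-1$ (nonnegativity from the perfect matching). The first (and main) step is the \emph{Claim}: the bipartite graph $H:=G[N_G(S),\,U\setminus S]$ has a matching of size $d+1$. Granting the Claim, pick such a matching $M_1$ with $V(M_1)=D_W\cup D_U$, $D_W\subseteq N_G(S)$, $D_U\subseteq U\setminus S$, $|D_W|=|D_U|=d+1$; since $d+1\le k$ and $G$ is $(d+1)$-extendable, $M_1$ lies in a perfect matching $M'$ of $G$. Since $V(M_1)$ is disjoint from $S$, the matching $M'\setminus M_1$ saturates $S$ using exactly $|S|$ of its edges, hence has $n-(d+1)-|S|\ge k-(d+1)$ edges missing $S$ entirely; adjoin $k-(d+1)$ of them to $M_1$ to get a matching $M$ of size $k$ with $V(M)\cap S=\emptyset$ and $|V(M)\cap N_G(S)|\ge d+1$. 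Then in $G-V(M)$ one has $|N_{G-V(M)}(S)|\le|N_G(S)|-(d+1)=|S|-1<|S|$, so $G-V(M)$ has no perfect matching, contradicting $k$-extendibility of $G$ (which forces $M$ to extend). To prove the Claim, assume $H$ has no matching of size $d+1$; by König's theorem there is a vertex cover $C_W\cup C_U$ of $H$ with $C_W\subseteq N_G(S)$, $C_U\subseteq U\setminus S$, $|C_W|+|C_U|\le d$. Put $B:=N_G(S)\setminus C_W$ (so $|B|=|S|+d-|C_W|\ge|S|\ge1$); every $b\in B$ has $N_G(b)\subseteq S\cup C_U$, hence $|N_G(B)|\le|S|+|C_U|$, while $|N_G(B)|\ge|B|$ from the perfect matching; comparing forces $|C_W|+|C_U|=d$ and $|N_G(B)|=|B|$, and also $|B|\le|S|+d\le n-1$. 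Now use that $G$ is $1$-extendable and connected: if $B\cup N_G(B)\ne V(G)$, then some edge $aw$ with $a\in N_G(B)$, $w\in W\setminus B$ leaves this set, and $aw$ lies in a perfect matching in which $B$ would have to be matched into $N_G(B)\setminus\{a\}$ — impossible; so $B\cup N_G(B)=V(G)$ and $\nu=2|B|\le\nu-2$, a contradiction.

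I expect the genuine obstacle to be exactly the Claim, more precisely its ``off-by-one'' character: a perfect matching of $G$ trivially yields only $d$ independent edges between $N_G(S)$ and $U\setminus S$, and extracting the $(d+1)$-st edge is where connectivity together with $1$-extendibility (equivalently, that $G$ is matching-covered and therefore has no ``tight'' set $B$ with $B\cup N_G(B)\subsetneq V(G)$) must be invoked. The bound $|B|\le n-1$, which ultimately uses the standing hypothesis $k\le\nu/2-1$, is what closes that argument, so that hypothesis is genuinely needed here rather than merely for the extendibility definition.
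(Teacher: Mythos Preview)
The paper does not give its own proof of this theorem; it is quoted as a result of Plummer \cite{P2} in the final-remarks section, with no argument supplied. So there is nothing in the paper to compare against, and your write-up stands on its own.

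Your ``only if'' direction is correct and is in fact the standard route: reduce, via Hall's theorem and a worst-case choice of the deleted vertices, to the surplus condition $|N_G(S)|\ge |S|+k$ for every nonempty $S\subseteq U$ with $|S|\le n-k$, and then derive a contradiction from a violating $S$ by manufacturing a size-$k$ matching disjoint from $S$ that eats $d+1$ vertices of $N_G(S)$. Your K\"onig/vertex-cover argument for the Claim (producing the tight set $B$ with $|N_G(B)|=|B|$ and then using connectivity plus $1$-extendability to rule it out) is clean and correct; the inequality $|B|\le n-1$ indeed uses $k\le \nu/2-1$, as you note.

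The one small gap is in the ``if'' direction: from the hypothesis you correctly get $|U|=|W|=n$, but the assertion ``$G$ has a perfect matching'' is not yet justified---knowing that $G'$ has a perfect matching only gives a matching of size $n-k$ in $G$. One line fixes this: the hypothesis (applied with $|X|=1$ on the $W$ side) gives $\delta(G)\ge k+1$, so if some $X\subseteq U$ with $|X|>n-k$ had $N_G(X)\ne W$, any $w\in W\setminus N_G(X)$ would satisfy $N_G(w)\subseteq U\setminus X$ and hence $k+1\le |N_G(w)|\le n-|X|<k$, a contradiction; together with the surplus condition for $|X|\le n-k$ this yields Hall's condition for $G$ and hence a perfect matching (and a matching of size $k$). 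With that patch the argument is complete.
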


Hence, following the terms in the definition of
$n$-factor-critical graphs, if we define \textquotedblleft
$2k$-factor-criticality\textquotedblright \ in a balanced
bipartite graph $G$ so that we keep the two partitions of $G-S$
balanced when choosing $S$, then $G$ is $k$-extendable if and only
if it is \textquotedblleft$2k$-factor-critical\textquotedblright,
for $0\leq k\leq \nu/2-1$. $\newline\newline\indent$Plummer
\cite{P1} has proved that $\kappa(G) \geq k+1$ for a
$k$-extendable graph $G$. Hence $\delta(G)\geq\kappa(G)\geq k+1$.
For minimal $k$-extendable bipartite graphs, the following result
of Lou \cite{L1} shows that the bound can always be reached.
\begin{theorem}\label{lml1}
Every minimal $k$-extendable bipartite graph $G$ with bipartition
(U,W) has at least $2k+2$ vertices of degree $k+1$. Furthermore,
both $U$ and $W$ contain at least $k+1$ vertices of degree $k+1$.
\end{theorem}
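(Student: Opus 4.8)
The plan is to recast $k$-extendability of a bipartite graph as a system of Hall-type inequalities via Theorem \ref{lmp2}, to read off from these what minimality forces on each edge, and then to reduce the two assertions to a single claim on one side of the bipartition. Since $k$-extendability and being minimal $k$-extendable are symmetric under interchanging $U$ and $W$, it suffices to prove that $U$ contains at least $k+1$ vertices of degree $k+1$; the same argument for $W$ gives the ``furthermore'' clause, and, $U$ and $W$ being disjoint, the count $2k+2$ for $G$. Put $n=|U|=|W|$ (equal, since $G$ is $k$-extendable with $k\ge1$ and hence has a perfect matching), so $n\ge k+1$; recall $\delta(G)\ge\kappa(G)\ge k+1$, so ``degree $k+1$'' means ``minimum degree''. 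By Theorem \ref{lmp2} and Hall's theorem, $G$ is $k$-extendable if and only if $|N_G(X)|\ge|X|+k$ for every nonempty $X\subseteq U$ with $|X|\le n-k$, and likewise for every such $X\subseteq W$; call a set attaining one of these bounds with equality a \emph{tight} set, and let $U_1,W_1$ be the sets of degree-$(k+1)$ vertices of $U,W$ (so $\{v\}$ is tight iff $v\in U_1\cup W_1$).

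Next, unwind minimality. Deleting an edge $e=xy$ lowers $|N_G(Z)|$ by at most one, and by exactly one only if $x\in Z$ and $y$'s sole neighbour in $Z$ is $x$ (or the symmetric statement with $x,y$ interchanged). Hence $G-e$ is not $k$-extendable exactly when $G$ has a tight set $Z$ with $x\in Z\subseteq U$ and $N_G(y)\cap Z=\{x\}$, or a tight set $Z$ with $y\in Z\subseteq W$ and $N_G(x)\cap Z=\{y\}$; call such a $Z$ a \emph{certificate} for the edge $xy$. Minimality says that every edge has a certificate. Note that every edge at a vertex of $U_1\cup W_1$ already has the trivial certificate $\{x\}$ (resp.\ $\{y\}$), so only edges both of whose ends have degree $\ge k+2$ are constrained.

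The heart of the proof is the claim that, in a minimal $k$-extendable bipartite graph, \emph{every tight set on the $U$-side lies inside $U_1$} (and symmetrically for $W$); equivalently, no vertex of degree $\ge k+2$ belongs to any tight set. Granting this, an edge $x_0y_0$ with $d(x_0),d(y_0)\ge k+2$ can have neither a $U$-certificate (which would place $x_0$ in a $U$-side tight set) nor a $W$-certificate, hence no certificate at all --- contradicting minimality. So $U_1\cup W_1$ meets every edge, i.e.\ every neighbour of a vertex of $U\setminus U_1$ lies in $W_1$. Now suppose, for contradiction, $|U_1|\le k$. Then $|U\setminus U_1|\ge n-k$, so any $(n-k)$-subset $X$ of $U\setminus U_1$ has $|N_G(X)|\ge n$, i.e.\ $N_G(X)=W$; hence $W=N_G(U\setminus U_1)\subseteq W_1$, so every vertex of $W$ has degree $k+1$. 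But then $n(k+1)=\sum_{w\in W}d(w)=|E(G)|=\sum_{u\in U}d(u)\ge|U_1|(k+1)+(n-|U_1|)(k+2)=(k+2)n-|U_1|$, forcing $|U_1|\ge n>k$, a contradiction. Thus $|U_1|\ge k+1$.

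The only substantive step is therefore the italicized claim, and that is where I expect the real work to be. The natural line of attack is: take a vertex $x$ of degree $\ge k+2$ lying in a tight set $X$ (say, an inclusion-minimal one), use minimality to obtain certificates for all edges at $x$, and exploit the submodularity of $Z\mapsto|N_G(Z)|$ --- so that intersecting tight sets cross nicely and a certificate for an edge at $x$ may be taken inside $X$ --- together with $\delta(G)\ge k+1$, to produce a single edge at $x$ whose removal leaves every Hall inequality valid, contradicting minimality. One should also deal separately with small $n$ (close to $k+1$); an alternative, possibly cleaner, route would be an ``ear-decomposition'' of minimal $k$-extendable bipartite graphs, in the spirit of the classical ear decomposition of matching-covered (minimal $1$-extendable) bipartite graphs.
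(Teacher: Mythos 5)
The paper does not prove Theorem \ref{lml1}; it quotes it from Lou \cite{L1}. So there is no in-paper proof to compare against, and your attempt has to stand on its own. The framework you set up is sound as far as it goes: the Hall-type reformulation of $k$-extendability via Theorem \ref{lmp2} (that $|N_G(X)|\ge |X|+k$ for every nonempty $X\subseteq U$ with $|X|\le n-k$, and symmetrically) is correct; the description of when $G-e$ fails to be $k$-extendable in terms of ``certificates'' is correct (for $k\ge 1$, connectedness of $G-e$ and the existence of a $k$-matching in $G-e$ are automatic because $\kappa(G)\ge k+1$); and the deduction of the theorem from your italicized claim --- first that $U_1\cup W_1$ meets every edge, then the counting argument forcing $|U_1|\ge n\ge k+1$ --- is valid.

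The genuine gap is that the italicized claim, which you yourself call ``the only substantive step'' and ``where I expect the real work to be'', is never proved; you only indicate a ``natural line of attack''. That claim is where the entire content of the theorem lives, and the sketch has concrete obstacles it does not address: (i) submodularity of $Z\mapsto|N_G(Z)|$ only uncrosses two tight sets on the same side of the bipartition when their union still has size at most $n-k$, which can fail, so you cannot in general replace a certificate for an edge at $x$ by one contained in your chosen minimal tight set; (ii) a certificate for an edge $xy$ with $x$ in a $U$-side tight set may itself live on the $W$-side, and submodularity says nothing about how a $U$-side tight set interacts with a $W$-side one; (iii) even granting such localization, you still must exhibit one specific edge at $x$ that no tight set on either side certifies, and nothing in the sketch produces it. Note also that your route actually proves something ostensibly stronger than the theorem, namely that the degree-$(k+1)$ vertices form a vertex cover of a minimal $k$-extendable bipartite graph; before committing to this approach you should verify that this strengthening is not false, since if it fails the whole reduction collapses. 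As it stands, this is a correct reduction of the theorem to an unproved (and unreferenced) structural claim, i.e.\ a plan rather than a proof.
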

While for minimal $k$-extendable non-bipartite graphs we have not
found such a simple characterization. When $k=1$, the minimum
degree can be 2 or 3. And no result is known for $k\geq 2$.
Illuminated by Lemma \ref{lmly1}, Lou and Yu \cite{LY} raised the
following conjecture.
\begin{conjecture}\label{con1}
Let $G$ be a minimal $k$-extendable graph on $\nu$ vertices with
$\nu/2+1\leq 2k+1$. Then $\delta(G)=k+1$, $2k$ or $2k+1$.
\end{conjecture}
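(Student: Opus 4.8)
The plan is to distinguish two cases according to whether $G$ is bipartite. The hypothesis $\nu/2+1\le 2k+1$ is the same as $k\ge\nu/4$, so Lemma~\ref{lmly1} applies and gives that either $G$ is bipartite or $\kappa(G)\ge 2k$; we shall also use throughout that $\kappa(G)\ge k+1$, hence $\delta(G)\ge k+1$.

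Suppose first that $G$ is bipartite. Then Theorem~\ref{lml1} shows that a minimal $k$-extendable bipartite graph has at least $2k+2$ vertices of degree $k+1$, and together with $\delta(G)\ge k+1$ this gives $\delta(G)=k+1$. (In this case no hypothesis on $\nu$ is needed.)

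Now suppose $G$ is non-bipartite, so that $\delta(G)\ge\kappa(G)\ge 2k$ by Lemma~\ref{lmly1}; it then remains only to prove $\delta(G)\le 2k+1$. If $\nu=2k+2$ this is immediate, since $\delta(G)\le\nu-1$. For $2k+2\le\nu\le 4k-2$ the idea is to pass to factor-criticality: by Theorem~\ref{th21} $G$ is $2k$-factor-critical, and since no bipartite graph is $2k$-factor-critical when $k\ge 1$ while among non-bipartite graphs on $\nu\le 4k-2$ vertices $k$-extendibility is equivalent to $2k$-factor-criticality, ``$G-e$ is not $k$-extendable'' is equivalent to ``$G-e$ is not $2k$-factor-critical'' for every edge $e$; hence $G$ is in fact \emph{minimal $2k$-factor-critical}. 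A short Berge--Tutte computation then shows that, for an edge $uv$ of a $2k$-factor-critical graph, $G-uv$ fails to be $2k$-factor-critical if and only if there is a set $Y\subseteq V(G)\setminus\{u,v\}$ with $o(G-Y)=|Y|-2k$ (a tight Berge--Tutte barrier) such that $u$ and $v$ lie in one common, necessarily even, component $C$ of $G-Y$ and $uv$ is a bridge of $C$ whose deletion splits $C$ into two odd parts. So minimality of $G$ says exactly that every edge of $G$ is such a ``splitting bridge'' of an even component of $G-Y$ for some tight set $Y$; a vertex count shows that $|Y|$ cannot be much larger than $2k$.

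The remaining step --- deriving a contradiction from the assumption $\delta(G)\ge 2k+2$ --- is the one I expect to be the main obstacle. A natural attack is to take a tight set $Y$ of smallest possible size: then $|Y|$ is close to $2k$, so, since $\delta(G)\ge 2k+2$, every vertex of a component $C$ of $G-Y$ retains many of its neighbours inside $C$, which forces the components of $G-Y$ to be large and to have hardly any bridges, hence very few splitting bridges; one would then compare the tight sets belonging to the various edges to exhibit an edge which is a splitting bridge of no tight set at all --- a removable edge, contradicting minimality. The difficulty is that an edge may be witnessed only by tight sets $Y$ whose size is comparatively large, where the minimum-degree bound no longer constrains the components of $G-Y$, so one really has to control how the Berge--Tutte (equivalently Gallai--Edmonds) barriers of the different graphs $G-u-v$ interlock; this is the heart of the problem. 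One extra handle is available: reasoning directly from $k$-extendibility rather than from $2k$-factor-criticality, for each edge $uv$ the non-extendibility of $G-uv$ produces a $k$-matching $M$ of $G-u-v$ for which $(G-V(M))-uv$ has no perfect matching, so the set $Y$ above may be taken to contain a $2k$-subset inducing a perfect matching. This same route must handle the single leftover value $\nu=4k$, where Theorem~\ref{th21} no longer applies --- $G^{(k)}$ is $k$-extendable on $4k$ vertices but not $2k$-factor-critical --- the conclusion following there from a degree-sum estimate on $d(u)+d(v)$ in the spirit of the proof of Theorem~\ref{th21}.
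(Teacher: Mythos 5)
This statement is labelled a \emph{conjecture} in the paper (it is Lou and Yu's conjecture, quoted in the final remarks), and the paper offers no proof of it; the only thing the authors do is observe that, outside the case $\nu=4k$, their Theorem~\ref{th21} reduces it to Conjecture~\ref{con2} of Favaron and Shi, which is itself open. So there is no proof in the paper to compare yours against, and your proposal should be judged as an attempt at an open problem.

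As such, your attempt contains a genuine and decisive gap, which to your credit you flag yourself. The parts you do carry out are correct: $\nu/2+1\le 2k+1$ is indeed $k\ge\nu/4$; in the bipartite case Theorem~\ref{lml1} together with $\delta(G)\ge\kappa(G)\ge k+1$ forces $\delta(G)=k+1$; in the non-bipartite case Lemma~\ref{lmly1} gives $\delta(G)\ge 2k$, the case $\nu=2k+2$ is trivial, and for $2k+4\le\nu\le 4k-2$ your transfer of minimality from $k$-extendibility to $2k$-factor-criticality is sound (Theorem~\ref{th21} gives $2k$-factor-criticality of $G$, and the contrapositive of Theorem~\ref{ob1}, plus the fact that a bipartite graph is never $2k$-factor-critical, gives that $G-e$ is not $2k$-factor-critical). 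But at that point what remains to show is precisely that a minimal $2k$-factor-critical graph has minimum degree $2k+1$ --- i.e.\ the even case of Conjecture~\ref{con2} --- and your paragraph on this step is a programme, not a proof: you describe a ``natural attack'' via tight Berge--Tutte barriers and then state explicitly that controlling how the barriers for different edges interlock ``is the heart of the problem.'' No argument is given that actually produces the removable edge. Likewise the leftover case $\nu=4k$ is dismissed with ``a degree-sum estimate in the spirit of the proof of Theorem~\ref{th21},'' but that estimate yields $d(v_1)+d(v_2)\le 4k$ against a lower bound of $4k$ when $\nu=4k$, i.e.\ no contradiction --- which is exactly why $G^{(k)}$ exists and why the paper singles out $\nu=4k$ as the case its remark does not cover. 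In short, your write-up reproduces (independently, and correctly) the paper's concluding observation that the conjecture reduces to Conjecture~\ref{con2} away from $\nu=4k$, but it does not prove the statement.
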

For minimal $n$-factor-critical graphs, Favaron and Shi \cite{FS}
raised the following conjecture.
\begin{conjecture}\label{con2}
Every minimal $n$-factor-critical graph $G$ has $\delta(G)=n+1$.
\end{conjecture}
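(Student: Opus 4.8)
\medskip

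\noindent\textbf{Towards Conjecture \ref{con2}.}
The plan is to separate the two inequalities. The bound $\delta(G)\ge n+1$ holds for every $n$-factor-critical graph: a vertex $v$ with $d(v)\le n$ could be placed outside a set $S\supseteq N(v)$ of size $n$, making $v$ isolated in $G-S$, so $G-S$ would have no perfect matching. The content is the reverse inequality $\delta(G)\le n+1$, which I would prove by contradiction: assume $G$ is minimal $n$-factor-critical with $\delta(G)\ge n+2$ and produce an edge $e$ with $G-e$ still $n$-factor-critical. The useful reformulation is that $e=xy$ is removable exactly when, for every $S\subseteq V(G)$ with $|S|=n$ and $x,y\notin S$, the graph $G-S$ has a perfect matching avoiding $e$ --- equivalently, $e$ is never a forced edge (one contained in every perfect matching) of such a $G-S$. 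So under $\delta(G)\ge n+2$ I must locate an edge that is ``non-forced everywhere''.

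The case $n=1$ I would handle with the classical ear-decomposition characterization of factor-critical graphs (Lov\'{a}sz): such a $G$ has an odd ear decomposition $G=C_0+P_1+\cdots+P_m$ starting from an odd cycle and adding odd ears, each intermediate graph factor-critical. If $m=0$ then $G=C_0$ and $\delta(G)=2=n+1$. If $m\ge1$ and the last ear $P_m$ were a single chord $xy$, then $G-xy=G_{m-1}$ would still be factor-critical, contradicting minimality; hence $P_m$ has length $\ge3$, its internal vertices have degree $2$, and $\delta(G)=2$. For $n\ge2$ I would want an analogous structural (``ear-type'') decomposition of $n$-factor-critical graphs whose terminal building block is forced to carry a vertex of degree $n+1$. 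Finding such a decomposition, or any comparable structural handle, is the step I expect to be the main obstacle: nothing of the kind is available for $n\ge2$.

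Failing that, I would attempt induction on $n$ via the elementary equivalence ``$G$ is $n$-factor-critical $\iff$ $G-v$ is $(n-1)$-factor-critical for every $v\in V(G)$''. If $\delta(G)\ge n+2$, each $G-v$ is $(n-1)$-factor-critical with $\delta(G-v)\ge n+1>(n-1)+1$, so by induction $G-v$ is not minimal and has a removable edge. The difficulty is that removability of a fixed $e=xy$ in $G$ is equivalent to $e$ being removable in $G-v$ for \emph{all} $v\notin\{x,y\}$ at once, and a plain double count, over pairs (edge, vertex), of the obstructing configurations is far too lossy to force a single universally removable edge; I would need to strengthen the inductive statement to a quantitative lower bound on the number of removable edges that survives vertex deletion.

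Finally, in the regime where $n$ is large relative to $\nu$, Conjecture \ref{con2} would follow from Conjecture \ref{con1}. For even $n=2k$ with $k\ge(\nu+2)/4$, Theorem \ref{th21} identifies ``$2k$-factor-critical'' with ``$k$-extendable'' among non-bipartite graphs, and the same identification applies to $G-e$ (same order, same $k$); so a minimal $2k$-factor-critical $G$ is also minimal $k$-extendable --- once one rules out the degenerate possibility that some $G-e$ turns bipartite --- whence Conjecture \ref{con1} yields $\delta(G)\in\{k+1,2k,2k+1\}$, and, since every $2k$-factor-critical graph has $\delta\ge2k+1$, we get $\delta(G)=2k+1=n+1$. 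The odd case $n=2k+1$ would similarly reduce, through Theorem \ref{th23} and $k\frac{1}{2}$-extendability, to a corresponding degree statement for minimal $k\frac{1}{2}$-extendable graphs. So in this regime the residual obstacle is simply that Conjecture \ref{con1} (and its half-integer analogue) is itself unresolved.
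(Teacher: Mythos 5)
You should first note that Conjecture \ref{con2} is exactly that --- an open conjecture of Favaron and Shi which the paper merely records and comments on; the paper contains no proof of it, so there is nothing to match your argument against, and your proposal, by its own candid admission, does not close the problem either. The pieces you do complete are sound: the lower bound $\delta(G)\ge n+1$ via isolating a low-degree vertex is correct, the reformulation of edge-removability in terms of $e$ never being a forced edge of any $G-S$ with $x,y\notin S$ is correct, and the $n=1$ case via odd ear decompositions is a valid complete argument (the last ear of any odd ear decomposition of a minimal factor-critical graph cannot be trivial, so its internal vertices have degree $2$). But the heart of the conjecture is the upper bound $\delta(G)\le n+1$ for $n\ge 2$, and there you have only named the obstacles (no ear-type decomposition for $n\ge2$; the induction via ``$G-v$ is $(n-1)$-factor-critical'' does not produce a single edge removable uniformly over all $v$) without overcoming them. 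That is a genuine gap, not a repairable slip.

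Your final reduction deserves one caution beyond its conditional nature. It only applies in the narrow regime $n=2k$ with $k\ge(\nu+2)/4$, i.e.\ $n\ge(\nu+2)/2$, and it rests on Conjecture \ref{con1}, which is itself unresolved; moreover the paper's own closing remark runs the implication in the opposite direction (Conjecture \ref{con1} is, via Theorems \ref{th21} and \ref{th23}, essentially a special case of Conjecture \ref{con2}, except when $\nu=4k$), so deriving \ref{con2} from \ref{con1} would be circular in spirit even if \ref{con1} were known. You would also need to verify, as you note, that no $G-e$ becomes bipartite --- easy here since $\delta(G)\ge 2k+1>\nu/2$ forces odd cycles to survive --- but that does not rescue the argument from its dependence on an open statement. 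In short: correct partial results, honestly flagged gaps, no proof.
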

By the results obtained, we see that except the case that
$\nu=4k$, Conjecture \ref{con1} is actually part of Conjecture
\ref{con2} and the value $2k$ in Conjecture 1 can be excluded.
\section*{Acknowledgements}
We thank Professor Qinglin Yu for suggesting the problem and his
valuable advices.

\end{document}